\newtheorem{theorem}{Theorem}[]
\newtheorem{lemma} [theorem]{Lemma}
\newtheorem{proposition} [theorem]{Proposition}
\newtheorem{definition} [theorem]{Definition}
\newtheorem{corollary} [theorem]{Corollary}
\newtheorem{remark}{Remark}
\newtheorem{example}{Example}
\begin{document}

\title{On $\mathcal{B}$-Open Sets}
\author{Layth M. Alabdulsada }

\address{Institute of Mathematics, University of Debrecen, H-4002 Debrecen, P.O. Box 400, Hungary}
\email{layth.muhsin@science.unideb.hu}

\subjclass[2000]{54C05, 54C08, 54C10} \keywords{Operator topological space, bi-operator topological space,
$\mathcal{B}$-open sets, $T^*$-open sets, contra-$\mathcal{B}$-continuous, Urysohn space, weakly Hausdorff space}

\bibliographystyle{alpha}

\begin{abstract}
The aim of this paper is to define and study $\mathcal{B}$-open sets and related properties.
A $\mathcal{B}$-open set is, roughly speaking, a generalization of a $b$-open set, which is in turn a generalization of a pre-open set and a semi-open set. Using $\mathcal{B}$-open sets, we introduce a number of concepts such as $\mathcal{B}$-dense, $\mathcal{B}$-Frechet, contra-$\mathcal{B}$-closed graph and contra-$\mathcal{B}$-continuity. Also, we define a bi-operator topological space $(X, \tau, T_1, T_2)$ which involves two operators $T_1$ and $T_2$, which are used to define $\mathcal{B}$-open sets.
 \end{abstract}
\maketitle

\section{Introduction}
Over the past years, an amount of generalizations of open sets has been considered.
The first notion due to Levine \cite{L1} in 1963 was semi-open sets, while in 1965 Nj{\aa}stad \cite{N2} introduced some classes of nearly open sets, more precisely, they investigated the structure of $\alpha$-open set and gave some applications. Mashhour et al. in 1982 \cite{M1} introduced and studied pre-open sets and pre-continuous functions.
In 1983 Abd El-Monsef et al. \cite{A1} introduced the new topological notions, $\beta$-open sets, $\beta$-continuous mappings and $\beta$-open mappings.
In 1996 \cite{A4}, Andrijevi\'{c} introduced and studied a new class of generalized open sets in a topological space, called $b$-open sets.
All of these above concepts were defined similarly using the closure operator Cl and the interior operator Int.

This research area (which is fertile in information) still takes a significant part of the investigations because it has a clear effect on the development of the topological space through the experience of many theories and characteristics of different types of open sets, for instance see (\cite{A2}, \cite{B1}, \cite{E1}, \cite{E3}, \cite{E4}, \cite{E5} and \cite{N3}).

We work on circulating the $b$-openness from a different point of view than previously stated since our generalization depends entirely on operators attached with topology $\tau$ on $X$ to define the $\mathcal{B}$-open sets. More accurately, let $P(X)$ be the power set of $X$ and functions $T_1, T_2: P(X) \to P(X)$ are operators associated with topology $\tau$ on $X$. Then the quadruple $(X, \tau, T_1, T_2)$ is called a bi-operator topological space.
However, if $T_1(S)= \mathrm{Cl(Int}(S))$ and $T_2(S)= \mathrm{Int(Cl}(S))$, then the notion of $\mathcal{B}$-open sets became exactly the same as the definition of the $b$-open sets. The use of the operator topological spaces for the first time goes back to H. J. Mustafa et al. \cite{M21}, \cite{M3}, and recently Alabdulsada \cite{LA}.

In this paper, first we introduce and study the new notion of bi-operator topological spaces and its related properties. Our generalization of open sets in topological space is called $\mathcal{B}$-open sets, which linked to bi-operator topological spaces. First we recall several concepts and definitions that contributed to constructing our definition, namely $\mathcal{B}$-open which generalizes $b$-open sets in a topological space. Afterwards, we apply $\mathcal{B}$-open sets to define some further new concepts, and show some remarks and examples for $\mathcal{B}$-open sets.

Our main results are given in Section 3, where we present and study several different spaces as well as functions which are based on $\mathcal{B}$-open sets. Also, we investigate the relationships between these types of functions, besides we check the relationships with some special spaces such as Urysohn space or  weakly Hausdorff space. To be precise, we prove that, among others, if the function $f: (X, \tau, T_1, T_2) \to (Y, \sigma)$ has a contra-$\mathcal{B}$-closed graph, then the inverse image of a contra-compact set $S$ of $Y$ is $\mathcal{B}$-closed in $X$. In addition, if $f : (X, \tau, T_1, T_2) \to (Y, \sigma)$ is contra-$\mathcal{B}$-continuous from a $\mathcal{B}$-connected space onto $Y$, then $Y$ is not a discrete space. Another new result says if $f : (X, \tau, T_1, T_2) \to (Y, \sigma)$ is a contra-$\mathcal{B}$-continuous surjective function and $X$ is $\mathcal{B}$-compact, then $Y$ is contra-compact. Furthermore, a number of important related properties are stated and proved.

\section{Background}

In this section, we recall and introduce some of the definitions and the fundamental notions that play a key role in this paper. Throughout,  $(X, \tau), \ (Y, \sigma)$ are arbitrary topological spaces and $S \subseteq X$. The closure of $S$ will be denoted by $\mathrm{Cl}(S)$. The interior of $S$ will be denoted by $\mathrm{Int}(S)$.

\begin{definition} \label{11}
A subset $S$ of a topological space $(X, \tau)$ is said to be:
  \begin{itemize}
    \item [(i)] \textsl{regular open}, if $S = \mathrm{Int(Cl}(S))$, \textsl{regular closed} if $S = \mathrm{Cl(Int}(S))$ \cite{ST}.
    \item [(ii)] \textsl{pre-open}, if $S \subseteq \mathrm{Int(Cl}(S))$, the complement of a pre-open is \textsl{pre-closed} \cite{M1}.
    \item [(iii)] \textsl{semi-open}, if  $S \subseteq \mathrm{Cl(Int}(S))$, the complement of a semi-open is \textsl{semi-closed}  \cite{L1}.
    \item [(iv)] \textsl{$\alpha$-open}, if $S \subseteq \mathrm{Int( Cl(Int}(S)))$, the complement of an $\alpha$-open is \textsl{$\alpha$-closed} \cite{N2}.
    \item [(v)] \textsl{$\beta$-open}, if $S \subseteq \mathrm{Cl(Int(Cl}(S)))$, the complement of a $\beta$-open is \textsl{$\beta$-closed} \cite{A1}.
    \item [(vi)] \textsl{$b$-open}, if $S \subseteq \mathrm{Cl(Int}(S)) \cup \mathrm{Int( Cl}(S))$, the complement of a $b$-open is \textsl{$b$-closed} \cite{A4}.
  \end{itemize}
\end{definition}
 In particular, the $\beta$-closure of a set $S$ denoted by $\beta\mathrm{Cl}(S)$,
is the intersection of all $\beta$-closed sets containing $S$. The $\beta$-interior of a set $S$ denoted by $\beta\mathrm{Int}(S)$, is the union of all $\beta$-open sets contained in $S$. The preclosure, preinterior, semiclosure, semiinterior, $b$-closure and $b$-interior of a set $S$ denoted by $\mathrm{pCl}(S), \mathrm{pInt}(S), \mathrm{sCl}(S),  \mathrm{sInt}(S),$ $b\mathrm{Cl}(S) \ \text{and} \ b\mathrm{Int}(S) $, respectively, are defined analogously.
\begin{proposition} \label{1} \cite{A4}
  Let S be a subset of a space X. Then:
  \begin{itemize}
    \item [(i)] $\mathrm{pInt}(S) = S \cap \mathrm{Int(Cl}(S)),$
    \item[(ii)] $\mathrm{pCl}(S) = S \cup \mathrm{Cl(Int}(S)),$
    \item [(iii)] $\mathrm{sInt}(S) = S \cap \mathrm{Cl(Int}(S)),$
    \item [(iv)] $\mathrm{sCl}(S) = S \cup \mathrm{Int(Cl}(S)).$
  \end{itemize}
\end{proposition}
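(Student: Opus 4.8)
The plan is to prove parts (ii) and (iii) directly and then deduce (i) and (iv) by passing to complements, using $\mathrm{pInt}(S) = X \setminus \mathrm{pCl}(X \setminus S)$, $\mathrm{sInt}(S) = X \setminus \mathrm{sCl}(X \setminus S)$, together with $\mathrm{Int}(X \setminus A) = X \setminus \mathrm{Cl}(A)$ and $\mathrm{Cl}(X \setminus A) = X \setminus \mathrm{Int}(A)$.

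For (ii), the first task is to verify that $S \cup \mathrm{Cl(Int}(S))$ is pre-closed, i.e.\ that its complement $(X \setminus S) \cap \mathrm{Int(Cl}(X \setminus S))$ is pre-open; this reduces to the inclusion
\[
\mathrm{Int}\bigl(S \cup \mathrm{Cl(Int}(S))\bigr) \subseteq \mathrm{Cl(Int}(S)).
\]
To see this, put $V = \mathrm{Int}(S \cup \mathrm{Cl(Int}(S)))$; then $V \setminus \mathrm{Cl(Int}(S))$ is open and contained in $S$, hence contained in $\mathrm{Int}(S)$, so $V \subseteq \mathrm{Int}(S) \cup \mathrm{Cl(Int}(S)) = \mathrm{Cl(Int}(S))$. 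Applying $\mathrm{Cl}$ and using idempotence of closure gives $\mathrm{Cl(Int}(S \cup \mathrm{Cl(Int}(S)))) \subseteq \mathrm{Cl(Int}(S)) \subseteq S \cup \mathrm{Cl(Int}(S))$, which is pre-closedness. Thus $S \cup \mathrm{Cl(Int}(S))$ is a pre-closed set containing $S$, so $\mathrm{pCl}(S) \subseteq S \cup \mathrm{Cl(Int}(S))$. Conversely, if $F$ is pre-closed and $S \subseteq F$, then $X \setminus F$ is pre-open, so $X \setminus F \subseteq \mathrm{Int(Cl}(X \setminus F))$; taking complements yields $\mathrm{Cl(Int}(F)) \subseteq F$, and monotonicity gives $\mathrm{Cl(Int}(S)) \subseteq \mathrm{Cl(Int}(F)) \subseteq F$, hence $S \cup \mathrm{Cl(Int}(S)) \subseteq F$. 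Intersecting over all such $F$ gives the reverse inclusion and proves (ii). Statement (i) is then immediate: $\mathrm{pInt}(S) = X \setminus \mathrm{pCl}(X \setminus S) = X \setminus \bigl((X \setminus S) \cup \mathrm{Cl(Int}(X \setminus S))\bigr) = S \cap \mathrm{Int(Cl}(S))$.

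For (iii) the argument runs in parallel. One first checks that $S \cap \mathrm{Cl(Int}(S))$ is semi-open, which rests on the identity $\mathrm{Int}(S \cap \mathrm{Cl(Int}(S))) = \mathrm{Int}(S)$: indeed $\mathrm{Int}(S \cap \mathrm{Cl(Int}(S))) = \mathrm{Int}(S) \cap \mathrm{Int(Cl(Int}(S)))$, and $\mathrm{Int}(S)$ is an open subset of $\mathrm{Cl(Int}(S))$, so $\mathrm{Int}(S) \subseteq \mathrm{Int(Cl(Int}(S)))$ and the intersection collapses to $\mathrm{Int}(S)$. Hence $\mathrm{Cl(Int}(S \cap \mathrm{Cl(Int}(S)))) = \mathrm{Cl(Int}(S)) \supseteq S \cap \mathrm{Cl(Int}(S))$, so this set is semi-open. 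Moreover any semi-open $A \subseteq S$ satisfies $A \subseteq \mathrm{Cl(Int}(A)) \subseteq \mathrm{Cl(Int}(S))$ and $A \subseteq S$, whence $A \subseteq S \cap \mathrm{Cl(Int}(S))$; therefore $S \cap \mathrm{Cl(Int}(S))$ is the largest semi-open subset of $S$, namely $\mathrm{sInt}(S)$. Finally (iv) follows from (iii) by complementation exactly as (i) followed from (ii).

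I expect the only genuine (though mild) obstacle to be verifying that the proposed right-hand sides really are pre-closed and semi-open — the two displayed inclusions/identities above; everything else is routine monotonicity together with the definitions of $\mathrm{pCl}$ and $\mathrm{sInt}$ as an intersection, respectively a union, over the appropriate family.
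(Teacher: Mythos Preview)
Your argument is correct: the key verifications that $S \cup \mathrm{Cl(Int}(S))$ is pre-closed and that $S \cap \mathrm{Cl(Int}(S))$ is semi-open are carried out cleanly, and the duality passage to (i) and (iv) is standard. However, there is nothing to compare against: in the paper this proposition is quoted without proof as a known result from Andrijevi\'{c} \cite{A4}, so the paper offers no argument of its own.
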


\begin{definition} \cite{M21}
 Let $(X, \tau)$ be a topological space  and $P(X)$ be the power set of $X$. A function $$T: P(X) \to P(X)$$ is said to be an operator associated with topology $\tau$ on $X$ if $U \subseteq T(U)$ for all $U \in \tau$ and the triple $(X, \tau, T)$ is called an {\em operator topological space}.
\end{definition}

\begin{definition}
  Let $(X, \tau, T)$ be an operator topological space and $S \subseteq X $, then
  \begin{itemize}
    \item [(i)] $S$ is said to be \textsl{$T$-open} \cite{M21}, if for each $x \in S$ there exists $U \in \tau$ such that  $x \in U \subseteq T(U) \subseteq S$.
  The complement of $T$-open is called {\em $T$-closed}.
     \item [(ii)] $S$ is said to be {\em $T^*$-open} \cite{M3}, if $S \subseteq T(S)$ (observe that $S$ not necessarily open).
 The complement of $T^*$-open is called {\em $T^*$-closed}.
      \end{itemize}

\end{definition}
 \begin{remark}
  $T_1\mathrm{Cl}(S), \ T_2\mathrm{Cl}(S)$ are the intersection of all $T_1$-closed,\ $T_2$-closed sets, resp., in $X$ containing $S$. Now, if $T_1(S) = \mathrm{Int(Cl}(S))$ and $T_2(S) = \mathrm{Cl(Int}(S))$ where $S \subseteq X$, then $T_1$-open set is exactly the pre-open set and  $T_2$-open set is exactly the semi-open set. In addition, we have that $T_1\mathrm{Cl}(S) \equiv \mathrm{pCl}(S)$ and $T_2\mathrm{Cl}(S) \equiv \mathrm{sInt}(S)$.
 \end{remark}
\begin{definition}
  Let $(X, \tau)$ be a topological space and $T_1, T_2$ be two operators associated with the topology $\tau$ on $X$ that is $U \subseteq T_1(U)$ and $U \subseteq T_2(U)$ for each $U \in \tau$. The quadruple $(X, \tau, T_1, T_2)$ is called a \textsl{bi-operator topological space}.
\end{definition}
\begin{example}\item
\begin{itemize}
  \item [(i)] If $T_1, T_2$ are the identity operators, i.e. $T_1(S)= S$ and $T_2(S)= S$, then the quadruple $(X, \tau, T_1, T_2)$ will reduces to $(X, \tau)$, thus the bi-operator topological space is the ordinary
topological space.
  \item [(ii)] Let $(X, \tau)$ be any topological space and $T_1, T_2: P(X) \to P(X)$ be functions such that $T_1(S) := \mathrm{Int(Cl}(S))$ and $T_2(S) := \mathrm{Cl(Int}(S))$ for any $S \subseteq X$.
  Notice that if $U$ is open in $X$, then $U \subseteq \mathrm{Int(Cl}(U)) = T_1(U)\ \text{and} \ U \subseteq \mathrm{Cl( Int}(U)) = T_2(U).$ Thus, $T_1, T_2$ are operators associated with the topology $\tau$ on $X$ and the quadruple $(X, \tau, T_1, T_2)$ is a bi-operator topological space.
\end{itemize}

\end{example}

\begin{definition}
Let $(X, \tau, T_1, T_2)$ be a bi-operator topological space and $S \subseteq X$. The set $S$ is said to be a \textsl{$\mathcal{B}$-open} set if $$S \subseteq T_1(S) \cup T_2(S).$$
\end{definition}
The complement of a $\mathcal{B}$-open set is \textsl{$\mathcal{B}$-closed}. Moreover, if $T_1(S)= \mathrm{Cl(Int}(S))$ and $T_2(S)= \mathrm{Int(Cl}(S))$, then $S$ is $\mathcal{B}$-open if and only if $S$ is $b$-open, so the concepts of $\mathcal{B}$-openness reduces to the concepts of $b$-openness in this case. Cf.  Definition \ref{11}.

\begin{remark}\label{B22}\item
\begin{itemize}
\item [(i)] As an example of $\mathcal{B}$-open set, one can consider a bi-operator topological space $(\mathbb{R},\tau_u,\\T_1,T_2)$ such that $\mathbb{R}$ stands for the set of real numbers and $\tau_u$ for the usual topology. Let  $S\subseteq \mathbb{R}$ and $T_1(S)= \mathrm{Int(Cl}(S))$ and $T_2(S) = \mathrm{Cl(Int}(S))$. If $S = [0, 1] \cup \big((1,2) \cap Q\big),$ $Q$ denotes the set of the rational numbers then $S$ is $\mathcal{B}$-open but neither $T^*_1$-open nor $T^*_2$-open set. On other hand, if $E = [0, 1) \cup Q$, then $E$ is $T^*_1$-open but not $T^*_2$-open while $E$ is $\mathcal{B}$-open.
  \item [(ii)] The intersection of two $\mathcal{B}$-open sets is not necessarily $\mathcal{B}$-open. So, the collection of all $\mathcal{B}$-open sets is not necessarily a topology on X.
  \item [(iii)] The intersection of any collection of $\mathcal{B}$-closed sets is $\mathcal{B}$-closed. $\mathcal{B}\mathrm{Cl}(S)$ is the intersection of all $\mathcal{B}$-closed sets containing $S$, i.e. $\mathcal{B}\mathrm{Cl}(S) := \cap \ \{ U \mid  U \ \text{is}\  \mathcal{B}\text{-closed},\ U \supseteq S \}$.
   \item [(iv)] $\mathcal{B}\mathrm{Int}(S)$ is the union of all $\mathcal{B}$-open sets contained in $S$, i.e. $\mathcal{B}\mathrm{Int}(S) := \cup \ \{ U \mid  U \ \text{is}\  \mathcal{B}\text{-open},\ U \subseteq S \}$.
  \item [(v)] Every $T_1^*$-open ($T_2^*$-open) set is $\mathcal{B}$-open because if we assume that $S$ is $T_1^*$-open then $S \subseteq T_1(S) \subseteq T_1(S) \cup T_2(S)$, therefore, $S$ is $\mathcal{B}$-open and the same for the $T_2^*$-open. More precisely, if we put $T_{12}^*$-open instead of $\mathcal{B}$-open, then we have \\
               {$T_1$-open  $\to$ open $\to$ $T_1^*$-open $\to$ $T_{12}^*$-open $\to$ $T_{123}^*$-open $\to$ ... $T_{123...n}^*$-open.}

Similarly,

{$T_2$-open  $\to$ open $\to$ $T_2^*$-open $\to$ $T_{12}^*$-open $\to$ $T_{123}^*$-open $\to$ ... $T_{123...n}^*$-open},

means that $S$ is $T_{123}^*$-open if
$$S \subseteq T_1(S)  \cup T_2(S)\cup T_3(S),$$
and $T_{123...n}^*$-open means analogously $$S \subseteq T_1(S)  \cup T_2(S)\cup T_3(S) \cup ... \cup T_n(S).$$

\end{itemize}
\end{remark}

\begin{definition}
The graph $G(f)$ of a function from a bi-operator topological space $(X,\tau,T_1,\\T_2)$ into a topological space $(Y, \sigma)$ is said to be
\begin{itemize}
    \item [(i)]\textsl{$\mathcal{B}$-regular graph}, if for every $(x,y) \in X \times Y \setminus G(f)$, there exists $U$ which is $\mathcal{B}$-closed in $X$ containing $x$ and a regular open set $V$ in $Y$ containing $y$ such that $(U \times V) \cap G(f)= \emptyset.$
    \item [(ii)] \textsl{contra-$\mathcal{B}$-closed graph}, if for each $(x,y) \in X \times Y\setminus G(f),$ there exists a $\mathcal{B}$-closed set $U$ in $X$ containing $x$ and a regular closed set $V$ in $Y$ containing $y$ such that $f(U) \cap V = \emptyset.$
    \end{itemize}

    \end{definition}
    \begin{definition} \cite{D1}
A function $f: (X, \tau) \to (Y, \sigma)$ is said to be \textsl{contra-continuous}, if $f^{-1}(V)$ is closed in $X$ for each open subset $V$ of $Y$.
\end{definition}

\begin{definition}
  A function $f: (X, \tau, T_1, T_2) \to (Y, \sigma)$ is said to be \textsl{contra-$\mathcal{B}$-continuous}, if $f^{-1}(V)$ is $\mathcal{B}$-closed in $X$ for each open subset $V$ of $Y$.
\end{definition}

\begin{definition}
  Let $(X, \tau, T_1, T_2)$ be a bi-operator topological space, then $X$ is called a \textsl{$\mathcal{B}$-Frechet}, if for each pair of distinct points $x_1, x_2$ of $X$, there exists $\mathcal{B}$-open sets $U$ and $V$ containing $x_1$ and  $x_2$, respectively where $x_2 \notin U$  and  $x_1 \notin V.$ This is equivalent to saying that each single $\{x\}$ is $\mathcal{B}$-closed.
\end{definition}
\begin{definition}
   \textnormal{A topological space $(X, \tau)$ is said to be (see \cite{A2}, \cite{E1}, \cite{N3} \cite{W1} and \cite{SM})}:
  \begin{itemize}

    \item [(i)] \textsl{compact}, if for every open cover of $X$ has finite subcover.
 \item [(ii)] \textsl{contra-compact}, if for every closed cover of $X$ has finite subcover.
    \item [(iii)] \textsl{$R$-compact}, if for every regular open cover of $X$ has finite subcover.
    \item [(iv)] \textsl{contra-$R$-compact}, if for every regular closed cover of $X$ has finite subcover.
    \item [(v)] \textsl{$R$-Lindel\"{o}f}, if for every regular open cover of $X$ has countable subcover.
    \item [(vi)] \textsl{contra-$R$-Lindel\"{o}f}, if for every regular closed cover of $X$ has countable subcover.
    \item [(vii)] \textsl{countable-$R$-compact}, if for every countable regular open cover of $X$ has finite subcover.
    \item [(viii)] \textsl{contra countable-$R$-compact}, if for every countable regular closed cover of $X$ has finite subcover.

  \end{itemize}

\end{definition}

\begin{definition}
 We call the bi-operator topological space $(X, \tau, T_1, T_2)$:
   \begin{itemize}
    \item [(i)] \textsl{$\mathcal{B}$-compact}, if for every $\mathcal{B}$-open cover of $X$ has finite subcover.
    \item [(ii)] \textsl{$\mathcal{B}$-Lindel\"{o}f}, if for every $\mathcal{B}$-open cover of $X$ has countable subcover.
    \item [(iii)] \textsl{countable-$\mathcal{B}$-compact}, if for every countable-$\mathcal{B}$-open cover of $X$ has finite subcover.
  \end{itemize}

\end{definition}

\begin{definition}
 A subset $S$ of a bi-operator topological space $(X, \tau, T_1, T_2)$ is said to be \textsl{$\mathcal{B}$-dense}, if $\mathcal{B}\mathrm{Cl}(S) = X$.
\end{definition}
\begin{remark}
  If $T_1(S)= \mathrm{Int(Cl}(S)), \ T_2(S)= \mathrm{Cl(Int}(S))$, then $\mathcal{B}$-dense will be $b$-dense and $\mathcal{B}\mathrm{Cl}(S)$ will be $b\mathrm{Cl}(S)$ such that $b$-dense is a set in $X$ if $b\mathrm{Cl}(S)= X.$
\end{remark}

\begin{definition}
    A bi-operator topological space $(X, \tau, T_1, T_2)$ is called a \textsl{$\mathcal{B}$-connected} provided $X$ is not a union of two nonempty $\mathcal{B}$-open sets.

\end{definition}
\begin{definition}
   A topological space $(X, \tau)$ is said to be a \textsl{weakly Hausdorff space} \cite{S1}, if each element of $X$ is an intersection of regular closed sets.
\end{definition}
\begin{definition}
  A topological space $(X, \tau)$ is an \textsl{Urysohn space} \cite{A6}, if for every pair of distinct points $x$ and $y$ in $X$, there exist open sets $U$ and $V$ such that $x \in U, \ y \in V$ and $\mathrm{Cl}(U) \cap \mathrm{Cl}(V) = \emptyset$.
\end{definition}
\section{Some properties of $\mathcal{B}$-open sets}

\begin{lemma}
 Let $(X, \tau, T_1, T_2)$  be a bi-operator topological space given by $$T_1(S)= \mathrm{Int(Cl}(S)), T_2(S)= \mathrm{Cl(Int}(S)).$$ Then
  \begin{itemize}
  \item [(i)] $\mathcal{B}\mathrm{Int}(S) = \mathrm{sInt}(S) \cup \mathrm{pInt}(S).$
    \item [(ii)] $\mathcal{B}\mathrm{Cl}(S) = \mathrm{sCl}(S) \cap \mathrm{pCl}(S).$

  \end{itemize}
\end{lemma}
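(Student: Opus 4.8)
The plan is to prove (i) by a two-sided inclusion and then deduce (ii) by passing to complements. The key structural observation is that for the operators $T_1(S)=\mathrm{Int(Cl}(S))$ and $T_2(S)=\mathrm{Cl(Int}(S))$ the defining set $T_1(S)\cup T_2(S)$ is exactly the one appearing in the definition of a $b$-open set, and — more importantly for the argument — both $T_1$ and $T_2$ are monotone with respect to inclusion. Consequently the union of any family of $\mathcal{B}$-open sets is again $\mathcal{B}$-open (cf. Remark \ref{B22}(iii)): if $S\subseteq T_1(S_\alpha)\cup T_2(S_\alpha)$ for each member $S_\alpha$ of a family with union $A$, monotonicity gives $S_\alpha\subseteq T_1(A)\cup T_2(A)$, hence $A\subseteq T_1(A)\cup T_2(A)$. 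In particular $\mathcal{B}\mathrm{Int}(S)$ is itself $\mathcal{B}$-open, namely the largest $\mathcal{B}$-open subset of $S$.

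For the inclusion $\mathrm{sInt}(S)\cup\mathrm{pInt}(S)\subseteq\mathcal{B}\mathrm{Int}(S)$ in (i): by Proposition \ref{1}, $\mathrm{sInt}(S)$ is semi-open and $\mathrm{pInt}(S)$ is pre-open, and with the present operators a semi-open set $U$ satisfies $U\subseteq\mathrm{Cl(Int}(U))=T_2(U)$ while a pre-open set $U$ satisfies $U\subseteq\mathrm{Int(Cl}(U))=T_1(U)$; in either case $U\subseteq T_1(U)\cup T_2(U)$, so both $\mathrm{sInt}(S)$ and $\mathrm{pInt}(S)$ are $\mathcal{B}$-open subsets of $S$, hence so is their union, which therefore lies in $\mathcal{B}\mathrm{Int}(S)$. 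For the reverse inclusion, put $A=\mathcal{B}\mathrm{Int}(S)$. Since $A$ is $\mathcal{B}$-open, $A\subseteq T_1(A)\cup T_2(A)$; since $A\subseteq S$, monotonicity of $T_1,T_2$ gives $A\subseteq T_1(S)\cup T_2(S)=\mathrm{Int(Cl}(S))\cup\mathrm{Cl(Int}(S))$. Combining with $A\subseteq S$ and using Proposition \ref{1}(i),(iii) yields
$$A\subseteq S\cap\big(\mathrm{Int(Cl}(S))\cup\mathrm{Cl(Int}(S))\big)=\big(S\cap\mathrm{Int(Cl}(S))\big)\cup\big(S\cap\mathrm{Cl(Int}(S))\big)=\mathrm{pInt}(S)\cup\mathrm{sInt}(S),$$
which proves (i).

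For (ii) I would use the standard dualities $\mathcal{B}\mathrm{Cl}(S)=X\setminus\mathcal{B}\mathrm{Int}(X\setminus S)$, $\mathrm{sCl}(S)=X\setminus\mathrm{sInt}(X\setminus S)$ and $\mathrm{pCl}(S)=X\setminus\mathrm{pInt}(X\setminus S)$, each coming from the fact that a set is $\mathcal{B}$-closed (resp. semi-closed, pre-closed) iff its complement is $\mathcal{B}$-open (resp. semi-open, pre-open), together with $\mathrm{Cl}=X\setminus\mathrm{Int}(X\setminus\cdot)$. Applying (i) to the set $X\setminus S$ and taking complements via De Morgan's law turns $\mathcal{B}\mathrm{Int}(X\setminus S)=\mathrm{sInt}(X\setminus S)\cup\mathrm{pInt}(X\setminus S)$ into $\mathcal{B}\mathrm{Cl}(S)=\mathrm{sCl}(S)\cap\mathrm{pCl}(S)$. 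Alternatively, (ii) can be verified directly from Proposition \ref{1}(ii),(iv): one writes $\mathrm{sCl}(S)\cap\mathrm{pCl}(S)=S\cup\big(\mathrm{Int(Cl}(S))\cap\mathrm{Cl(Int}(S))\big)$ and checks that this set is $\mathcal{B}$-closed (its complement is the set appearing in (i) for $X\setminus S$), contains $S$, and is contained in every $\mathcal{B}$-closed superset of $S$.

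I do not anticipate a genuine obstacle. The only point requiring care is the claim that $\mathcal{B}\mathrm{Int}(S)$ is $\mathcal{B}$-open, which is precisely where monotonicity of the two operators enters: it is what lets $\mathcal{B}\mathrm{Int}(S)$ be substituted into its own defining inclusion. Once that is in hand, everything reduces to the set identities of Proposition \ref{1} and routine complementation.
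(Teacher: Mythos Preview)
Your argument is correct and follows essentially the same route as the paper: both directions of (i) rest on the fact that $\mathcal{B}\mathrm{Int}(S)$ is itself $\mathcal{B}$-open, monotonicity of $T_1,T_2$, and the identities of Proposition~\ref{1}, with the ``evident'' inclusion coming from semi-open and pre-open sets being $\mathcal{B}$-open. For (ii) the paper simply says it is proved analogously, whereas you deduce it from (i) by complementation; this is a harmless variation and arguably cleaner.
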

 \begin{proof}
     It is sufficient to prove only the first assertion. As we have stated in Remark \ref{B22} $(iv)$ that $\mathcal{B}\mathrm{Int}(S)$ is the union of all $\mathcal{B}$-open sets contained in $S$, therefore
$$
  \mathcal{B}\mathrm{Int}(S) \supset \mathrm{Cl(Int}(\mathcal{B}\mathrm{Int}(S))) \cup \mathrm{Int(Cl}(\mathcal{B}\mathrm{Int}(S)))
   \supset \mathrm{Cl(Int}(S)) \cup \mathrm{Int(Cl}(S))).$$
 Thus, with the help of Proposition \ref{1}, we obtain
 \begin{align*}
    \mathcal{B}\mathrm{Int}(S)  & = S \cap [ \mathrm{Cl(Int}(S)) \cup \mathrm{Int(Cl}(S))] \\
     & = [S \cap  \mathrm{Cl(Int}(S))]  \cup [S \cap  \mathrm{Int(Cl}(S))]\\
     & = \mathrm{sInt}(S) \cup \mathrm{pInt}(S).
 \end{align*}
The opposite direction is evident. One can prove the second statement in a similar way.
  \end{proof}

\begin{lemma}
    Let $(X, \tau, T_1, T_2)$ be a bi-operator topological space, suppose that $$T_1(W \cap Z)= T_1(W) \cap T_1(Z)$$ and $$T_2(W \cap Z)= T_2(W) \cap T_2(Z),$$ for all $W \in \tau, Z \subseteq X$ then the following assertions are satisfied:
  \begin{itemize}
    \item [(i)] The intersection of an open set with a $\mathcal{B}$-open set is a $\mathcal{B}$-open set.
    \item [(ii)] The union of any family of $\mathcal{B}$-open sets is a $\mathcal{B}$-open set.
  \end{itemize}

\end{lemma}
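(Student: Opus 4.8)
Both statements reduce to elementary manipulations with unions and intersections; the point is to apply the distributivity hypothesis only in the allowed case (open first argument) and to combine it with the defining property $W \subseteq T_i(W)$ for $W \in \tau$.

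For (i): fix $W \in \tau$ and a $\mathcal{B}$-open set $S$, so $S \subseteq T_1(S) \cup T_2(S)$. Intersecting with $W$ and distributing gives
\[
W \cap S \subseteq W \cap \big(T_1(S) \cup T_2(S)\big) = \big(W \cap T_1(S)\big) \cup \big(W \cap T_2(S)\big).
\]
Then, for each $i \in \{1,2\}$, I would estimate $W \cap T_i(S) \subseteq T_i(W) \cap T_i(S)$ (using $W \subseteq T_i(W)$) and invoke the hypothesis $T_i(W) \cap T_i(S) = T_i(W \cap S)$ — this is exactly the place where openness of $W$ is used. Substituting back yields $W \cap S \subseteq T_1(W \cap S) \cup T_2(W \cap S)$, i.e. $W \cap S$ is $\mathcal{B}$-open.

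For (ii): let $\{S_\alpha\}_{\alpha \in A}$ be $\mathcal{B}$-open and set $U = \bigcup_{\alpha \in A} S_\alpha$. For a fixed $\alpha$, from $S_\alpha \subseteq U$ and monotonicity of the operators I get $T_i(S_\alpha) \subseteq T_i(U)$, hence $S_\alpha \subseteq T_1(S_\alpha) \cup T_2(S_\alpha) \subseteq T_1(U) \cup T_2(U)$. Forming the union over $\alpha$ gives $U \subseteq T_1(U) \cup T_2(U)$, so $U$ is $\mathcal{B}$-open.

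\textbf{Expected difficulty.} Part (i) is essentially a one-line computation; the only subtlety is organizing it so that $T_i$ is applied to $W \cap S$ with $W$ (not $S$) open, as required by the hypothesis. Part (ii) is where care is needed: the displayed chain uses that $A \subseteq B$ implies $T_i(A) \subseteq T_i(B)$, which is not part of the bare definition of an operator; I would either record monotonicity as a standing assumption (it holds for the motivating operators $\mathrm{Int}\,\mathrm{Cl}$ and $\mathrm{Cl}\,\mathrm{Int}$) or observe that if the distributivity relation is assumed for all subsets $W, Z$ then monotonicity is automatic, since $A \subseteq B$ gives $T_i(A) = T_i(A \cap B) = T_i(A) \cap T_i(B) \subseteq T_i(B)$. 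No further obstacles arise; the rest is routine set algebra.
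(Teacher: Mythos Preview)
Your argument for (i) is exactly the paper's: intersect the inclusion $S\subseteq T_1(S)\cup T_2(S)$ with $W$, distribute, enlarge $W$ to $T_i(W)$ via $W\subseteq T_i(W)$, and then apply the hypothesis $T_i(W)\cap T_i(S)=T_i(W\cap S)$.

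For (ii) your route diverges from the paper's. The paper takes the union of the inclusions $V_a\subseteq T_1(V_a)\cup T_2(V_a)$ and then asserts as ``clear'' that $\bigcup_a T_i(V_a)=T_i\bigl(\bigcup_a V_a\bigr)$, i.e.\ that each $T_i$ commutes with arbitrary unions. You instead use only monotonicity, $S_\alpha\subseteq U\Rightarrow T_i(S_\alpha)\subseteq T_i(U)$, which is a strictly weaker requirement and suffices for the conclusion. Your caution here is well placed: neither commutation with unions nor monotonicity follows from the lemma's stated hypotheses (the distributivity is assumed only for $W\in\tau$, so your derivation $T_i(A)=T_i(A\cap B)=T_i(A)\cap T_i(B)$ would need $A$ open, not arbitrary). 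Both proofs therefore rest on an implicit extra property of the operators; yours makes this explicit and asks for less, while the paper simply asserts the stronger identity without comment.
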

\begin{proof}
  (i) Assume that there exists $U \in \tau$, which is an open set, and $V$ is a $\mathcal{B}$-open set. We are going to show that $U \cap V $ is also a $\mathcal{B}$-open set. Since $U$ is open, then $$U \subseteq  T_1(U), \ U \subseteq  T_2(U.$$ By the definition of the $\mathcal{B}$-open set: $$ V \subseteq  T_1(V) \cup  T_2(V).$$
  Now,
  \begin{align*}
    U \cap V & \subseteq U \cap [ T_1(V) \cup  T_2(V)]\\
    & = [U \cap  T_1(V)] \cup [U \cap  T_2(V)] \\
     & \subseteq [T_1(U) \cap  T_1(V)] \cup [T_2(U) \cap  T_2(V)] \\
    & = [T_1(U \cap  V)] \cup [T_2(U \cap  V)],
  \end{align*}
as wanted to be shown.\\
  (ii) Suppose that $\mathcal{F}= \{V_a| \ a \in \lambda \}$ is a family of $\mathcal{B}$-open set, $$V_a \subseteq  T_1(V_a) \cup  T_2(V_a).$$ Then we have,
  \begin{align*}
    \bigcup_a V_a & \subseteq \bigcup_a ( T_1(V_a) \cup  T_2(V_a)) \\
     & = \bigcup_a T_1(V_a) \cup \bigcup_a T_2(V_a).
  \end{align*}
  It is clear that $\bigcup_a T_1(V_a)= T_1(\bigcup_a V_a)$ and $\bigcup_a T_2(V_a)= T_2(\bigcup_a V_a),$ therefore $$ \bigcup_a V_a  \subseteq T_1(\bigcup_a V_a) \cup T_2(\bigcup_a V_a).$$ Thus, $ \bigcup_a V_a$ is a $\mathcal{B}$-open set, which completes the proof.
\end{proof}

\begin{proposition}
   \textnormal{Let $(X, \tau, T_1, T_2)$ be a bi-operator topological space. If the function $f: (X, \tau, T_1, T_2) \to (Y, \sigma)$ has a contra-$\mathcal{B}$-closed graph, then the inverse image of a contra-compact set $S$ of $Y$ is $\mathcal{B}$-closed in $X$.}
\end{proposition}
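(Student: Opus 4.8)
I would argue by exhibiting, around each point of the complement $X \setminus f^{-1}(S)$, a $\mathcal{B}$-closed set lying entirely inside that complement, by means of a compactness argument adapted to the contra setting. First I would fix $x \in X \setminus f^{-1}(S)$, so that $f(x) \notin S$; then for every $y \in S$ one has $f(x) \neq y$, hence $(x,y) \in X \times Y \setminus G(f)$. Applying the contra-$\mathcal{B}$-closed graph hypothesis at each such pair yields, for every $y \in S$, a $\mathcal{B}$-closed set $U_y \ni x$ in $X$ and a regular closed set $V_y \ni y$ in $Y$ with $f(U_y) \cap V_y = \emptyset$.

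Next I would feed the cover $\{V_y : y \in S\}$ of $S$ into contra-compactness. Since each $V_y$ is regular closed it is in particular closed, so $\{V_y \cap S : y \in S\}$ is a cover of $S$ by closed sets, and contra-compactness of $S$ produces finitely many indices $y_1,\dots,y_n$ with $S \subseteq V_{y_1}\cup\dots\cup V_{y_n}$. Put $U := U_{y_1}\cap\dots\cap U_{y_n}$. By Remark~\ref{B22}(iii) an intersection of $\mathcal{B}$-closed sets is $\mathcal{B}$-closed, so $U$ is $\mathcal{B}$-closed and contains $x$; and since $f(U) \subseteq f(U_{y_i})$ for each $i$, one gets $f(U) \cap V_{y_i} = \emptyset$ for all $i$, hence $f(U) \cap S = \emptyset$, i.e.\ $U \cap f^{-1}(S) = \emptyset$. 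Thus $x \in U \subseteq X \setminus f^{-1}(S)$ with $U$ being $\mathcal{B}$-closed. It is worth noting that this step works precisely because $\mathcal{B}$-closedness --- unlike $\mathcal{B}$-openness, cf.\ Remark~\ref{B22}(ii) --- is stable under intersections, which is presumably why the graph hypothesis is phrased with a $\mathcal{B}$-closed set $U$ rather than a $\mathcal{B}$-open one.

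The remaining step --- and the one I expect to be the real obstacle --- is to conclude from this that $f^{-1}(S)$ is $\mathcal{B}$-closed, equivalently that $X \setminus f^{-1}(S) = \bigcup_{x \notin f^{-1}(S)} U_x$ is $\mathcal{B}$-open. In an ordinary topological space such an inference (every point of the exterior having a closed neighbourhood inside it) is immediate, and I would expect the write-up to follow the same lines; here, however, the operators $T_1, T_2$ are not assumed monotone and $\mathcal{B}$-open sets need not be closed even under finite intersection, so the behaviour of $T_1$ and $T_2$ on this union should be checked directly rather than quoted by analogy. A minor, routine point along the way is that regular closed covers are admissible in the definition of contra-compactness, which is clear since regular closed sets are closed.
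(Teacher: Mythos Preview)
Your approach is step-for-step the paper's: fix $x\notin f^{-1}(S)$, apply the contra-$\mathcal{B}$-closed graph hypothesis at each $(x,a)$ with $a\in S$ to obtain $\mathcal{B}$-closed $U_a\ni x$ and (regular) closed $V_a\ni a$ with $f(U_a)\cap V_a=\emptyset$, use contra-compactness of $S$ to extract $V_{a_1},\dots,V_{a_n}$ covering $S$, put $U=\bigcap_{i=1}^n U_{a_i}$ (which is $\mathcal{B}$-closed by Remark~\ref{B22}(iii)), and conclude $U\cap f^{-1}(S)=\emptyset$.

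Regarding the step you single out as the real obstacle: the paper does not address it either. From $x\in U$, $U$ $\mathcal{B}$-closed, $U\cap f^{-1}(S)=\emptyset$, the paper simply writes ``Hence $x\notin\mathcal{B}\mathrm{Cl}(f^{-1}(S))$, this shows that $f^{-1}(S)$ is $\mathcal{B}$-closed'' and stops. As you note, the standard characterisation $x\notin\mathcal{B}\mathrm{Cl}(A)$ iff some $\mathcal{B}$-\emph{open} set around $x$ misses $A$ would require a $\mathcal{B}$-open witness, whereas the argument produces a $\mathcal{B}$-\emph{closed} one; equivalently, $X\setminus f^{-1}(S)=\bigcup_x U_x$ is exhibited as a union of $\mathcal{B}$-closed sets, and no general stability of $\mathcal{B}$-openness under such unions is available without further hypotheses on $T_1,T_2$. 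So your reservation is well founded, and your write-up already matches the level of detail given in the paper's proof; the paper treats that passage by analogy with the ordinary topological case rather than by a direct verification.
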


\begin{proof}
  Assume that $S$ is a contra-compact set of $Y$ and $x \notin f^{-1}(S)$, i.e. for all $a \in S, (x, a) \notin G(f)$. Then there exist $U_a$ which is $\mathcal{B}$-closed containing $x$ and $V_a$ which is closed in $Y$ containing $a$ such that $$f(U_a) \cap V_a = \emptyset.$$
  On the other hand one can consider $\mathcal{F}=\{S \cap V_a | \ a \in S\}$ and $\mathcal{F}$ is closed cover of the subspace $S$. We have that $S$ is contra-compact, then there exists $a_1, a_2,..., a_n$ such that $S \subseteq \cup_{i=1}^n V_{a_i}$. Now, if $U = \cap_{i=1}^n U_{a_i}$, then $U$ is $\mathcal{B}$-closed containing $x$ and  $f(U) \cap S = \emptyset$, therefore $U \cap f^{-1}(S) = \emptyset.$ Hence $x \notin \mathcal{B}\mathrm{Cl}(f^{-1}(S))$, this shows that $f^{-1}(S)$ is $\mathcal{B}$-closed.
\end{proof}

\begin{proposition}
   Let $f:(X, \tau, T_1, T_2) \to (Y, \sigma)$ from a bi-operator topological space to a contra-compact space which has a contra-$\mathcal{B}$-closed graph, then $f$ is a contra-$\mathcal{B}$-continuous function.
\end{proposition}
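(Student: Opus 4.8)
The plan is to peel off the definition of contra-$\mathcal{B}$-continuity and then reduce everything to the Proposition just proved. Recall that $f$ is contra-$\mathcal{B}$-continuous exactly when $f^{-1}(V)$ is $\mathcal{B}$-closed in $X$ for every open $V\subseteq Y$. So first I would fix an arbitrary open set $V\subseteq Y$; since $f$ has a contra-$\mathcal{B}$-closed graph and the preceding Proposition shows that $f^{-1}(S)$ is $\mathcal{B}$-closed in $X$ for every contra-compact subspace $S$ of $Y$, it is enough to check that $V$ itself is a contra-compact subspace of $Y$. Applying that Proposition with $S=V$ then yields that $f^{-1}(V)$ is $\mathcal{B}$-closed, and as $V$ was arbitrary, $f$ is contra-$\mathcal{B}$-continuous.

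The one thing left to verify is the purely topological fact that an open subspace of a contra-compact space is contra-compact. To do this I would start from a cover $\{F_\alpha\}_\alpha$ of $V$ by sets closed in the subspace $V$, write $F_\alpha=V\cap C_\alpha$ with $C_\alpha$ closed in $Y$, and then consider the family $\{C_\alpha\}_\alpha\cup\{Y\setminus V\}$. This is a cover of $Y$ by sets closed in $Y$ — it covers $Y$ because $V\subseteq\bigcup_\alpha F_\alpha\subseteq\bigcup_\alpha C_\alpha$ while $Y\setminus V$ takes care of the complement, and each member is closed since $V$ is open. Contra-compactness of $Y$ gives a finite subcover, say coming from $C_{\alpha_1},\dots,C_{\alpha_n}$ (and possibly $Y\setminus V$); since $V\cap(Y\setminus V)=\emptyset$ we get $V\subseteq C_{\alpha_1}\cup\dots\cup C_{\alpha_n}$, hence $V=\bigcup_{i=1}^n(V\cap C_{\alpha_i})=\bigcup_{i=1}^n F_{\alpha_i}$, a finite subcover of the original. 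So $V$ is contra-compact.

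The hard part is locating this last lemma and, in particular, spotting the trick of adjoining the single closed set $Y\setminus V$ to turn a relatively-closed cover of the open set $V$ into a closed cover of all of $Y$; after that, contra-compactness of $Y$ together with the earlier Proposition do the rest. A small point worth stating carefully is that the earlier Proposition is phrased for contra-compact subsets of $Y$ in the subspace sense, which is precisely the notion produced by the lemma, so the two pieces dovetail with no extra hypotheses needed.
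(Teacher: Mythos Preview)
Your proposal is correct and follows essentially the same route as the paper: first show that every open subset of the contra-compact space $Y$ is contra-compact by adjoining the closed complement $Y\setminus V$ to a relatively-closed cover to obtain a closed cover of $Y$, extract a finite subcover, and intersect back with $V$; then invoke the preceding Proposition to conclude that $f^{-1}(V)$ is $\mathcal{B}$-closed. The paper's argument differs only in notation (it writes $U$, $V_a$, $W_a$, $U^c$ where you write $V$, $F_\alpha$, $C_\alpha$, $Y\setminus V$).
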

\begin{proof}
  Let $\mathcal{F}= \{V_a | \ a \in \lambda \}$ be a cover of an open set $U \subset Y$ by the closed subsets $V_a$ of $U$ for each $a \in \lambda$. Thus, there exists a closed set $W_{a}$ of $Y$ where $V_a= W_{a} \cap \ U,$ i.e. $\{W_a | \ a \in \lambda \} \cup \{U^c\}$ is a closed cover of $Y$. But $Y$ is a contra-compact space, namely, there exist $a_1, a_2,..., a_n$ such that $Y= \cup_{i=1}^{n} W_{a_i} \cup U^c$. Hence $U= \cup_{i=1}^{n} V_{a_i},$ and consequently $U$ is contra-compact. From previous proposition $f^{-1}(U)$ is $\mathcal{B}$-closed in $X$, thus $f$ is contra-$\mathcal{B}$-continuous.
\end{proof}

The proof of the next lemma is immediate, since $g$ is contra-$\mathcal{B}$-continuous, so $f^{-1}(U) = g^{-1}(X \times U)$ is $\mathcal{B}$-closed in $X$, then $f$ is contra-$\mathcal{B}$-continuous.

\begin{lemma}
  Let $f:(X, \tau, T_1, T_2) \to (Y, \sigma)$ be a function and \ $g: (X, \tau, T_1, T_2)$ $\to (X \times Y)$ be a graph function of $f$ defined by $g(x)=(x, f(x))$ for every $x \in X$. If $g$ is contra-$\mathcal{B}$-continuous then $f$ is contra-$\mathcal{B}$-continuous.
\end{lemma}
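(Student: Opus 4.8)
The plan is to realise every preimage $f^{-1}(V)$ as a preimage under $g$ of a basic open subset of the product space $X\times Y$, and then simply invoke the contra-$\mathcal{B}$-continuity of $g$. First I would let $V$ be an arbitrary open subset of $Y$ and consider the set $X\times V\subseteq X\times Y$; since $X$ is open in $(X,\tau)$ and $V$ is open in $(Y,\sigma)$, the set $X\times V$ is open in the product topology on $X\times Y$.

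Next I would verify the set identity $g^{-1}(X\times V)=f^{-1}(V)$. Indeed, for $x\in X$ one has $g(x)=(x,f(x))$, so $g(x)\in X\times V$ holds precisely when $f(x)\in V$; hence the two preimages coincide. Now, because $g$ is contra-$\mathcal{B}$-continuous and $X\times V$ is open in $X\times Y$, the set $g^{-1}(X\times V)$ is $\mathcal{B}$-closed in $X$, and therefore $f^{-1}(V)$ is $\mathcal{B}$-closed in $X$. Since $V$ was an arbitrary open subset of $Y$, this shows that $f$ is contra-$\mathcal{B}$-continuous.

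This argument is essentially bookkeeping, so there is no real obstacle; the only point meriting a moment's care is the choice of the test set — one must take $X\times V$, not $V$, as the open set of $X\times Y$ to pull back — together with the elementary observation that a product of open sets is open. It is also worth noting that no compatibility assumption on the operators $T_1,T_2$ (such as $T_i(W\cap Z)=T_i(W)\cap T_i(Z)$) is needed here, because the $\mathcal{B}$-closedness of $f^{-1}(V)$ is transported verbatim from the corresponding property of $g$.
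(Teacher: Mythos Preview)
Your argument is correct and coincides exactly with the paper's own proof: the paper also observes that $f^{-1}(U)=g^{-1}(X\times U)$ and invokes the contra-$\mathcal{B}$-continuity of $g$ on the open set $X\times U$.
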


\begin{proposition}
  Let $f : (X, \tau, T_1, T_2) \to (Y, \sigma)$ be contra-$\mathcal{B}$-continuous and $g :(X, \tau) \to (Y, \sigma)$ is contra-continuous. If $Y$ is an Urysohn space, then $E=\{ x \in X | \ f(x)=g(x)\}$ is $\mathcal{B}$-closed in $X$.
\end{proposition}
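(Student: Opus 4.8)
The plan is to prove that $E=\mathcal{B}\mathrm{Cl}(E)$ by showing that no point of $X\setminus E$ lies in $\mathcal{B}\mathrm{Cl}(E)$. So fix $x\in X$ with $f(x)\neq g(x)$. Since $Y$ is an Urysohn space, there are open sets $U,V$ of $Y$ with $f(x)\in U$, $g(x)\in V$ and $\mathrm{Cl}(U)\cap\mathrm{Cl}(V)=\emptyset$.

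Next I would push the two \emph{closed} sets $\mathrm{Cl}(U)$ and $\mathrm{Cl}(V)$ back through $f$ and $g$. Since $f$ is contra-$\mathcal{B}$-continuous, the preimage of the open set $Y\setminus\mathrm{Cl}(U)$ is $\mathcal{B}$-closed, hence $A:=f^{-1}(\mathrm{Cl}(U))$ is $\mathcal{B}$-open, and $x\in A$ because $f(x)\in U\subseteq\mathrm{Cl}(U)$. Since $g$ is contra-continuous, $B:=g^{-1}(\mathrm{Cl}(V))$ is open in $X$, and again $x\in B$. The point of working with the closures is precisely that $A\cap B$ avoids $E$: if $z\in A\cap B$ then $f(z)\in\mathrm{Cl}(U)$ and $g(z)\in\mathrm{Cl}(V)$, so disjointness of these two closed sets forces $f(z)\neq g(z)$, i.e. $z\notin E$. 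Hence $x\in A\cap B\subseteq X\setminus E$.

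To conclude I would observe that $A\cap B$, being the intersection of the $\mathcal{B}$-open set $A$ with the open set $B$, is itself $\mathcal{B}$-open (this is the earlier lemma on the intersection of an open set with a $\mathcal{B}$-open set, under the standing compatibility of $T_1,T_2$ with intersections). Then $X\setminus(A\cap B)$ is a $\mathcal{B}$-closed set that contains $E$ but not $x$, so $x\notin\mathcal{B}\mathrm{Cl}(E)$. As $x$ was an arbitrary point outside $E$, we get $\mathcal{B}\mathrm{Cl}(E)\subseteq E$, and since trivially $E\subseteq\mathcal{B}\mathrm{Cl}(E)$, the set $E$ equals its $\mathcal{B}$-closure, which is $\mathcal{B}$-closed by Remark \ref{B22}(iii).

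The step I expect to be the real obstacle is this last one. In a bi-operator topological space even finite intersections of $\mathcal{B}$-open sets need not be $\mathcal{B}$-open (Remark \ref{B22}(ii)), so producing a single $\mathcal{B}$-open set around $x$ that misses $E$ is not automatic; one genuinely needs $A\cap B$ to be $\mathcal{B}$-open, which is why I would invoke the distributivity hypothesis $T_i(W\cap Z)=T_i(W)\cap T_i(Z)$ for $W$ open exactly as in that lemma. Everything else — extracting $U,V$ from the Urysohn property and reading off $\mathcal{B}$-openness of $A$ and openness of $B$ — is routine; the only small precaution is to pull back the \emph{closures} $\mathrm{Cl}(U),\mathrm{Cl}(V)$ rather than $U,V$ themselves, so that the two contra-type hypotheses hand us an open (respectively $\mathcal{B}$-open), not a closed, preimage.
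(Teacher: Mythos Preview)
Your proof is correct and follows essentially the same route as the paper's: separate $f(x)$ and $g(x)$ by open sets with disjoint closures, pull back the closures to obtain a $\mathcal{B}$-open $A$ and an open $B$, and use $A\cap B$ as a $\mathcal{B}$-open neighbourhood of $x$ disjoint from $E$. The paper is in fact less careful than you about the step you flag as the obstacle---it simply asserts that $W\cap Z$ is $\mathcal{B}$-open without mentioning the distributivity hypothesis on $T_1,T_2$ needed for that lemma.
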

\begin{proof}
  Suppose that $x \in E^c$, this implies that $f(x) \neq g(x)$. Since $Y$ is an Urysohn space, then there exist open sets $U$ and $V$ such that $f(x) \in U, \ g(x) \in V$ and $\mathrm{Cl}(U) \cap \mathrm{Cl}(V)=\emptyset.$ Since the function $f$ is  contra-$\mathcal{B}$-continuous, $f^{-1}(\mathrm{Cl}(U))$ is $\mathcal{B}$-open in $X$ and $g$ is contra-continuous, therefore $g^{-1}(\mathrm{Cl}(V))$ is open in $X$.
  If we consider $W= f^{-1}(\mathrm{Cl}(U)), Z= g^{-1}(\mathrm{Cl}(V)),$ then $x \in W \cap Z= S$ where $S$ is $\mathcal{B}$-open in $X$ and $f(S) \cap g(S) \subseteq f(W) \cap g(Z) \subseteq \mathrm{Cl}(U) \cap \mathrm{Cl}(V) = \emptyset.$
  Hence $f(S) \cap g(S)= \emptyset$ and $S \cap E = \emptyset, S \subseteq E^c$ where $S$ is $\mathcal{B}$-open. We conclude that $x \notin \mathcal{B}\mathrm{Cl}(E)$, and so $E$ is $\mathcal{B}$-closed in $X.$
\end{proof}
\begin{corollary}
  Let $f : (X, \tau, T_1, T_2) \to (Y, \sigma)$  be contra-$\mathcal{B}$-continuous and let $g: (X, \tau) \to (Y, \sigma)$ be contra-continuous.
  If $Y$ is an Urysohn space and $ f=g$ on a $\mathcal{B}$-dense set $S \subseteq X$, then $ f=g$ on $X$.
\end{corollary}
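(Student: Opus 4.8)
The plan is to read off the conclusion from the preceding proposition together with the definition of $\mathcal{B}$-density. First I would apply the proposition directly to the given pair $f,g$: since $f$ is contra-$\mathcal{B}$-continuous, $g$ is contra-continuous, and $Y$ is an Urysohn space, the set
$$E=\{x\in X \mid f(x)=g(x)\}$$
is $\mathcal{B}$-closed in $X$. This is the only nontrivial input, and it is already established, so no work is needed here.

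Next I would observe that the hypothesis "$f=g$ on $S$" is precisely the statement $S\subseteq E$. I then invoke the monotonicity of the operator $\mathcal{B}\mathrm{Cl}$, which is immediate from the description in Remark \ref{B22}$(iii)$ (the intersection of all $\mathcal{B}$-closed supersets can only shrink as the set grows): from $S\subseteq E$ we get $\mathcal{B}\mathrm{Cl}(S)\subseteq\mathcal{B}\mathrm{Cl}(E)$. Moreover, since $E$ is $\mathcal{B}$-closed, $E$ is itself a $\mathcal{B}$-closed set containing $E$, whence $\mathcal{B}\mathrm{Cl}(E)=E$.

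Finally I would combine these facts with the $\mathcal{B}$-density of $S$, which by definition means $\mathcal{B}\mathrm{Cl}(S)=X$. Chaining the inclusions gives
$$X=\mathcal{B}\mathrm{Cl}(S)\subseteq\mathcal{B}\mathrm{Cl}(E)=E\subseteq X,$$
so $E=X$, i.e. $f(x)=g(x)$ for every $x\in X$. There is no real obstacle in this argument; the entire content is packaged into the previous proposition, and the remaining steps are the routine monotonicity and idempotence-on-closed-sets properties of $\mathcal{B}\mathrm{Cl}$ read off from Remark \ref{B22}. If anything merits a word of care, it is simply making explicit that $\mathcal{B}\mathrm{Cl}$ is monotone and fixes $\mathcal{B}$-closed sets, both of which follow at once from the defining formula $\mathcal{B}\mathrm{Cl}(S)=\cap\{U\mid U\text{ is }\mathcal{B}\text{-closed},\ U\supseteq S\}$.
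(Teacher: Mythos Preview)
Your proof is correct and follows essentially the same route as the paper: invoke the preceding proposition to get that $E$ is $\mathcal{B}$-closed, note $S\subseteq E$, and then use $\mathcal{B}$-density together with monotonicity of $\mathcal{B}\mathrm{Cl}$ and $\mathcal{B}\mathrm{Cl}(E)=E$ to conclude $E=X$. If anything, your write-up is cleaner in making the monotonicity and closure-fixing steps explicit.
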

\begin{proof}
  From the previous result $E=\{ x \in X | f(x)=g(x)\}$ is $\mathcal{B}$-closed in $X.$ Now we assumed that $f=g$ on $\mathcal{B}$-dense set and $S \subseteq E.$ Since $f$ is contra-$\mathcal{B}$-continuous and $g$ is contra-continuous,
  then $X = \mathcal{B}\mathrm{Cl}(S)\subseteq \mathcal{B}\mathrm{Cl}(E)= S$. Therefore, $f=g$ on $X$.
\end{proof}
\begin{proposition}
   If $f : (X, \tau, T_1, T_2) \to (Y, \sigma)$ is contra-$\mathcal{B}$-continuous from a $\mathcal{B}$-connected space onto $Y$, then $Y$ is not a discrete space.
\end{proposition}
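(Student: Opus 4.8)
The plan is to argue by contradiction, using the fact that in a discrete space \emph{every} subset is open, so that contra-$\mathcal{B}$-continuity forces a single subset of $X$ to be simultaneously $\mathcal{B}$-closed and $\mathcal{B}$-open.

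So suppose, for a contradiction, that $Y$ is discrete; we may assume $Y$ has at least two points (the one-point case falls under the usual tacit nondegeneracy convention). Fix $y_0 \in Y$ and put $V = \{y_0\}$ and $W = Y \setminus \{y_0\}$. Since $Y$ carries the discrete topology, both $V$ and $W$ are open in $Y$, and they are nonempty and disjoint with $V \cup W = Y$. Because $f$ is onto, the preimages $f^{-1}(V)$ and $f^{-1}(W)$ are nonempty; they are disjoint, since $f^{-1}(V) \cap f^{-1}(W) = f^{-1}(V \cap W) = \emptyset$; and $f^{-1}(V) \cup f^{-1}(W) = f^{-1}(Y) = X$.

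Now invoke contra-$\mathcal{B}$-continuity of $f$: as $V$ and $W$ are open in $Y$, both $f^{-1}(V)$ and $f^{-1}(W)$ are $\mathcal{B}$-closed in $X$. But $f^{-1}(V) = X \setminus f^{-1}(W)$ is then the complement of a $\mathcal{B}$-closed set, hence $\mathcal{B}$-open; symmetrically $f^{-1}(W) = X \setminus f^{-1}(V)$ is $\mathcal{B}$-open as well. Thus $X = f^{-1}(V) \cup f^{-1}(W)$ is a union of two nonempty (indeed disjoint) $\mathcal{B}$-open sets, contradicting the assumption that $X$ is $\mathcal{B}$-connected. Therefore $Y$ cannot be discrete.

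The argument is essentially just an unravelling of the definitions of contra-$\mathcal{B}$-continuity, of $\mathcal{B}$-closed versus $\mathcal{B}$-open (complements of each other), and of $\mathcal{B}$-connectedness, with surjectivity used exactly once, to guarantee that the two pieces are nonempty. Accordingly I expect no real obstacle; the only point requiring a word of care is the degenerate case $|Y| \le 1$, where ``discrete'' is uninteresting and the statement is to be read with the standard nondegeneracy hypothesis on $Y$.
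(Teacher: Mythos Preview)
Your proof is correct and follows essentially the same route as the paper: assume $Y$ discrete, pick a proper nonempty subset (you specialize to a singleton, the paper leaves it generic), use that it is clopen so its preimage is both $\mathcal{B}$-closed and $\mathcal{B}$-open, and contradict $\mathcal{B}$-connectedness. Your version is slightly more explicit about why the two pieces are nonempty and about the degenerate case $|Y|\le 1$, but the argument is the same.
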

\begin{proof}
  Let $Y$ be a discrete space and $\emptyset \neq S \subset Y,$ then $S$ is a proper nonempty open and closed subset of $Y$. Then $f^{-1}(S)$ is a proper nonempty $\mathcal{B}$-open and $\mathcal{B}$-closed subset of $X$ such that $X = f^{-1}(S) \cup (f^{-1}(S))^c$ which means that $X$ is $\mathcal{B}$-disconnected space and this contradicts our assumption. Thus, $Y$ is not discrete.
\end{proof}
\begin{definition}
    \textnormal{A function $f : (X, \tau, T_1, T_2) \to (Y, \sigma)$  is called an \textsl{almost contra-$\mathcal{B}$-continuous function}, if $f^{-1}(V)$ is a $\mathcal{B}$-closed for every regular open set $V$ in $Y.$}
\end{definition}
\begin{proposition}
Let  $f : (X, \tau, T_1, T_2) \to (Y, \sigma)$ be a surjective almost contra-$\mathcal{B}$-continuous function, then:
  \begin{itemize}
    \item [(i)] if $X$ is $\mathcal{B}$-Lindel\"{o}f, then $Y$ is contra-$R$-Lindel\"{o}f.
    \item [(ii)] if $X$ is $\mathcal{B}$-compact, then $Y$ is contra-$R$-compact.
    \item [(iii)] if $X$ is countable-$\mathcal{B}$-compact, then $Y$ is countable contra-$R$-compact.
  \end{itemize}

\end{proposition}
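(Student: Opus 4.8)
The plan is to transfer a regular closed cover of $Y$ to a $\mathcal{B}$-open cover of $X$ through $f^{-1}$, apply the relevant covering hypothesis on $X$, and then push the extracted subcover back to $Y$ using surjectivity. Since the three parts differ only in the cardinality of the subcover produced, I would write out (i) in full and note that (ii) and (iii) follow by the identical argument with the obvious replacements.

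First I would record the elementary duality step: if $V \subseteq Y$ is regular closed, then $Y \setminus V$ is regular open, so by almost contra-$\mathcal{B}$-continuity $f^{-1}(Y \setminus V) = X \setminus f^{-1}(V)$ is $\mathcal{B}$-closed, and therefore $f^{-1}(V)$ is $\mathcal{B}$-open. Thus $f^{-1}$ sends regular closed covers of $Y$ to $\mathcal{B}$-open families in $X$.

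Next, given a regular closed cover $\{V_a \mid a \in \lambda\}$ of $Y$, the family $\{f^{-1}(V_a) \mid a \in \lambda\}$ consists of $\mathcal{B}$-open sets and covers $X$, because $f^{-1}$ preserves unions and $\bigcup_a V_a = Y$. Invoking $\mathcal{B}$-Lindel\"ofness of $X$ gives a countable subfamily $\{f^{-1}(V_{a_i}) \mid i \in \mathbb{N}\}$ that still covers $X$. Surjectivity then yields $Y = f(X) = f\big(\bigcup_i f^{-1}(V_{a_i})\big) \subseteq \bigcup_i f(f^{-1}(V_{a_i})) \subseteq \bigcup_i V_{a_i}$, so $\{V_{a_i} \mid i \in \mathbb{N}\}$ is a countable subcover of $Y$; hence $Y$ is contra-$R$-Lindel\"of. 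For (ii) one replaces ``countable'' by ``finite'' and uses $\mathcal{B}$-compactness of $X$ to get a finite subcover, concluding that $Y$ is contra-$R$-compact; for (iii) one starts from a countable regular closed cover of $Y$, whose $f^{-1}$-image is a countable $\mathcal{B}$-open cover of $X$, and applies countable-$\mathcal{B}$-compactness to extract a finite subcover.

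The only point that requires any care is the duality in the second paragraph — that the complement of a regular open set is regular closed and that $\mathcal{B}$-closedness is by definition complementary to $\mathcal{B}$-openness — together with the observation that surjectivity is exactly what legitimizes the push-forward $f(f^{-1}(V_{a_i})) \subseteq V_{a_i}$ being used to re-cover $Y$. Beyond that, this is the standard ``pull back the cover, shrink it, push it forward'' argument, and I do not anticipate a genuine obstacle.
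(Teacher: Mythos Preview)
Your argument is correct and follows essentially the same route as the paper's proof: pull back a regular closed cover of $Y$ to a $\mathcal{B}$-open cover of $X$, extract the appropriate subcover, and push it forward via surjectivity. You are in fact more explicit than the paper about the duality step (why $f^{-1}$ of a regular closed set is $\mathcal{B}$-open), which the paper simply asserts without comment.
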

\begin{proof}
  We are going to prove (i) and (ii) and one can prove (iii) in a similar way.\\
  (i) Consider a family $\mathcal{S}=\{ V_a | \ a \in \lambda\}$ to be a regular closed cover of $Y,$ at the same time let $\mathcal{S^*}=\{ f^{-1}(V_a) | \ a \in \lambda\}$ be a $\mathcal{B}$-open cover of $X.$ But $X$ is $\mathcal{B}$-Lindel\"{o}f, then there exist $a_1, a_2, ..., a_n$ such that $X=\cup_{i=1}^{\infty} f^{-1}(V_{a_i})$, we have $Y = f(X) = f(\cup_{i=1}^{\infty}f^{-1}(V_{a_i}))$. Then $Y = \cup_{i=1}^{\infty}(V_{a_i})$ is contra-$R$-Lindel\"{o}f.\\
  (ii) Using the same technique as above, let $\mathcal{F}=\{ U_a | \ a \in \lambda\}$ be a regular closed cover of $Y$ since $f$ is a surjective almost contra-$\mathcal{B}$-continuous function. So $\mathcal{F^*}=\{ f^{-1}(U_a) | \ a \in \lambda\}$ is a $\mathcal{B}$-open cover of $X$ but $X$ is $\mathcal{B}$-compact, then there exists $a_1, a_2, ..., a_n$ where $X=\cup_{i=1}^n f^{-1}(U_{a_i}).$ Consequently, $$Y = f(X) = f(\cup_{i=1}^nf^{-1}(U_{a_i}))= \cup_{i=1}^n(U_{a_i}).$$ This clearly forces $Y$ to be contra-$R$-compact.

\end{proof}
\begin{definition} \cite{N31}
 A function $f : X \to Y$  is called:
\begin{itemize}
   \item \textsl{almost continuous}, if $f^{-1}(V)$ is open in $X$ for every regular open set $V$ in $Y$.
  \item \textsl{$R$-continuous}, if $f^{-1}(V)$ is a regular open set of $X$ for each regular closed set $V$ in $Y$.
\end{itemize}

\end{definition}
\begin{lemma} \cite{N31}
 If a function $f : X \to Y$  is almost contra-$b$-continuous and almost continuous, then $f$ is a $R$-continuous function.
\end{lemma}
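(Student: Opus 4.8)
The plan is to isolate one point-set-topological fact about sets that are simultaneously open and $b$-closed, and then pull it back along the two hypotheses. Here ``almost contra-$b$-continuous'' is taken to be the classical $b$-analogue of the almost contra-$\mathcal{B}$-continuity used above, i.e. $f^{-1}(V)$ is $b$-closed in $X$ for every regular open set $V$ of $Y$, with $b$-open/$b$-closed as in Definition~\ref{11}(vi), which is the special case $T_1(S)=\mathrm{Cl(Int}(S))$, $T_2(S)=\mathrm{Int(Cl}(S))$ of $\mathcal{B}$-openness.

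First I would establish the elementary fact that a subset $S$ of a topological space which is simultaneously open and $b$-closed is regular open (dually, a set which is simultaneously closed and $b$-open is regular closed). To prove it, observe that $S$ is $b$-closed if and only if $\mathrm{Int(Cl}(S))\cap\mathrm{Cl(Int}(S))\subseteq S$; this is obtained by complementing the defining inclusion for the $b$-open set $X\setminus S$ and using $\mathrm{Int}(X\setminus S)=X\setminus\mathrm{Cl}(S)$ and $\mathrm{Cl}(X\setminus S)=X\setminus\mathrm{Int}(S)$ (alternatively it follows from Proposition~\ref{1}, since $b\mathrm{Cl}(S)=\mathrm{sCl}(S)\cap\mathrm{pCl}(S)=S\cup(\mathrm{Int(Cl}(S))\cap\mathrm{Cl(Int}(S)))$). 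If in addition $S$ is open, then $\mathrm{Cl(Int}(S))=\mathrm{Cl}(S)\supseteq\mathrm{Int(Cl}(S))$, so the inclusion reduces to $\mathrm{Int(Cl}(S))\subseteq S$; together with $S=\mathrm{Int}(S)\subseteq\mathrm{Int(Cl}(S))$ this forces $S=\mathrm{Int(Cl}(S))$, that is, $S$ is regular open. The dual statement follows by interchanging $\mathrm{Int}$ and $\mathrm{Cl}$, or by complementation.

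Then I would finish as follows. Let $V$ be a regular closed set of $Y$, so that $Y\setminus V$ is a regular open set of $Y$. Since $f$ is almost continuous, $f^{-1}(Y\setminus V)=X\setminus f^{-1}(V)$ is open in $X$; since $f$ is almost contra-$b$-continuous, $f^{-1}(Y\setminus V)=X\setminus f^{-1}(V)$ is $b$-closed in $X$. By the fact just proved, $X\setminus f^{-1}(V)$ is regular open in $X$, and hence $f^{-1}(V)$ is regular closed in $X$; equivalently, the preimage of every regular open set of $Y$ is regular open in $X$. Thus $f$ carries regular sets of $Y$ to regular sets of $X$, i.e. $f$ is $R$-continuous.

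The only real content is the point-set lemma identifying ``open $+$ $b$-closed'' with ``regular open''; everything else is bookkeeping with complements. The one point to watch is that it is almost continuity which delivers openness of the preimage while it is almost contra-$b$-continuity which delivers its $b$-closedness, so that the two hypotheses genuinely act on the same set; I expect this coordination, rather than any deeper issue, to be the only (minor) obstacle.
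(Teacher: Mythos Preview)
The paper does not supply its own proof of this lemma; it is quoted from \cite{N31} and used as a black box in the subsequent proposition. So there is nothing to compare your argument against.

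That said, your proof is correct. The key point-set fact you isolate---that a set which is simultaneously open and $b$-closed is regular open---is exactly what is needed, and your derivation of it from the characterization $b\mathrm{Cl}(S)=S\cup\bigl(\mathrm{Int(Cl}(S))\cap\mathrm{Cl(Int}(S))\bigr)$ is clean. The final step, pulling back a regular open set of $Y$ through the two hypotheses to obtain an open $b$-closed (hence regular open) preimage, is straightforward.

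One cosmetic remark: the paper's stated definition of $R$-continuity (``$f^{-1}(V)$ is regular \emph{open} for each regular \emph{closed} $V$'') appears to contain a typo; the way the lemma is actually applied in the next proposition (``$\mathcal{S}^*=\{f^{-1}(V_a)\}$ is a regular closed cover of $X$'') makes clear that the intended meaning is that preimages of regular closed sets are regular closed, equivalently preimages of regular open sets are regular open. Your conclusion matches this intended (and standard) meaning, so there is no genuine discrepancy.
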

\begin{proposition}
Let  $f : (X, \tau, T_1, T_2) \to (Y, \sigma)$ be an almost contra-$\mathcal{B}$-continuous and surjective almost-continuous function,
suppose $T_1(S)= \mathrm{Int(Cl}(S))$ and $T_2(S)= \mathrm{Cl(Int}(S))$, then $Y$ is:
  \begin{itemize}
    \item [(i)] contra-$R$-compact, if $X$ is contra-$R$-compact.
    \item [(ii)] $R$-compact, if $X$ is $R$-compact.
    \item [(iii)] $R$-Lindel\"{o}f, if $X$ is $R$-Lindel\"{o}f.
    \item [(iv)] countable-$R$-compact, if $X$ is countable$R$-compact.
    \item [(v)] countable contra-$R$-compact, if $X$ is countable contra-$R$-compact.
    \item [(vi)] contra-$R$-Lindel\"{o}f, if $X$ is contra-$R$-Lindel\"{o}f.
  \end{itemize}

\end{proposition}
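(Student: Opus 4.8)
The plan is to reduce everything to the $b$-open setting and then transport covers back and forth along $f$. Since $T_1(S)=\mathrm{Int(Cl}(S))$ and $T_2(S)=\mathrm{Cl(Int}(S))$, a subset is $\mathcal{B}$-open (resp. $\mathcal{B}$-closed) exactly when it is $b$-open (resp. $b$-closed), so the hypotheses say precisely that $f$ is surjective, almost continuous, and almost contra-$b$-continuous. The heart of the argument is the claim: \emph{for every regular open $V\subseteq Y$ the set $f^{-1}(V)$ is regular open in $X$, and for every regular closed $V\subseteq Y$ the set $f^{-1}(V)$ is regular closed in $X$.} This is essentially the Lemma cited from \cite{N31} in the present guise, and once it is available each of the six items is the same bookkeeping argument.

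I would prove the claim directly. Let $V$ be regular open in $Y$ and set $S=f^{-1}(V)$. Almost continuity gives that $S$ is open, while almost contra-$\mathcal{B}$-continuity gives that $S$ is $\mathcal{B}$-closed, hence $b$-closed. Complementing the $b$-openness of $X\setminus S$ yields $S\supseteq \mathrm{Int(Cl}(S))\cap\mathrm{Cl(Int}(S))$; since $S$ is open we have $\mathrm{Cl(Int}(S))=\mathrm{Cl}(S)\supseteq\mathrm{Int(Cl}(S))$, so $S\supseteq\mathrm{Int(Cl}(S))$, and conversely $S=\mathrm{Int}(S)\subseteq\mathrm{Int(Cl}(S))$. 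Thus $S=\mathrm{Int(Cl}(S))$, i.e. $S$ is regular open. The regular closed case follows by complementation, since the complement of a regular closed set is regular open and $f^{-1}$ commutes with complements.

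With the claim established, I would write out (i) and (ii) and note that the remaining four are verbatim analogues. For (ii): let $\{V_a\mid a\in\lambda\}$ be a regular open cover of $Y$; then $\{f^{-1}(V_a)\mid a\in\lambda\}$ covers $X$ (each $x$ lies in some $f^{-1}(V_a)$ because $f(x)$ lies in some $V_a$) and, by the claim, is a regular open cover. Since $X$ is $R$-compact there are $a_1,\dots,a_n$ with $X=\bigcup_{i=1}^n f^{-1}(V_{a_i})$; applying $f$ and using surjectivity in the form $f(f^{-1}(V))=V$ gives $Y=\bigcup_{i=1}^n V_{a_i}$, so $Y$ is $R$-compact. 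Part (i) is the same with ``regular open'' replaced by ``regular closed'' and ``$R$-compact'' by ``contra-$R$-compact''; (iii) and (vi) replace ``finite subcover'' by ``countable subcover''; (iv) and (v) restrict attention to countable covers throughout.

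The only step carrying real content is the claim, and within it the observation that an open $b$-closed set is regular open; the rest is routine manipulation of preimages, images, and the identity $f(f^{-1}(V))=V$. I expect the main point requiring care to be the duality: one must check that the preimage of a regular \emph{closed} cover is again a regular \emph{closed} cover of $X$, so that the hypothesis on $X$ applies in the matching form, and this forces the two continuity assumptions to be used together, neither one alone being enough.
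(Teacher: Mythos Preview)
Your proposal is correct and follows essentially the same route as the paper: reduce $\mathcal{B}$ to $b$, invoke that an almost continuous and almost contra-$b$-continuous map pulls regular open (resp.\ regular closed) sets back to regular open (resp.\ regular closed) sets, and then run the standard cover-transport argument using surjectivity. The only difference is that the paper cites this pullback property as the lemma from \cite{N31}, whereas you supply a short direct proof (the observation that an open $b$-closed set is regular open), which makes your version self-contained and in fact slightly sharper than the paper's somewhat loosely stated use of $R$-continuity.
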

\begin{proof}
It is enough to prove (i) and for the rest one can use the same methods to prove them.\\
(i) $T_1(S)= \mathrm{Int(Cl}(S))$ and $T_2(S)= \mathrm{Cl(Int}(S))$ are given. So $f$ is almost contra-$\mathcal{B}$-continuous and surjective almost-continuous, by the above lemma, $f$ is $R$-continuous, that is the inverse of each regular closed set in $Y$ is regular in $X$. Assume that $\mathcal{S}=\{ V_a | \ a \in \lambda\}$ is a regular closed cover of $Y.$ Consequently, $\mathcal{S^*}=\{ f^{-1}(V_a) | \ a \in \lambda\}$ is a regular closed cover of $X$, but $X$ is contra-$R$-compact, therefore there exists $a_1, a_2, ..., a_n$ such that $X=\cup_{i=1}^n f^{-1}(V_{a_i})$ and $Y = f(X) = f(\cup_{i=1}^nf^{-1}(V_{a_i}))$, which shows that $Y$ is contra-$R$-compact.
\end{proof}
\begin{proposition}
  If $f : (X, \tau, T_1, T_2) \to (Y, \sigma)$ is a contra-$\mathcal{B}$-continuous function and $S$ is $\mathcal{B}$-compact relative to $X$, then $f(X)$ is contra-compact in $Y.$
\end{proposition}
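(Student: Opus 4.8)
The plan is to dualize the classical argument that continuous images of compact sets are compact, trading ``open'' for ``closed'' on the target side and ``open'' for ``$\mathcal{B}$-open'' on the source side; I will show that $f(S)$ is contra-compact in $Y$ (in particular, taking $S = X$, that $f(X)$ is contra-compact). First I would fix an arbitrary cover of $f(S)$ by closed sets of $Y$, say $\mathcal{F} = \{V_a \mid a \in \lambda\}$ with each $V_a$ closed in $Y$ and $f(S) \subseteq \bigcup_a V_a$; a cover of the subspace $f(S)$ by relatively closed sets has this form once each member is replaced by a closed set of $Y$ whose trace it is, so no generality is lost.

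Next, since each $V_a$ is closed in $Y$, its complement $V_a^c$ is open, and contra-$\mathcal{B}$-continuity of $f$ gives that $f^{-1}(V_a^c) = \big(f^{-1}(V_a)\big)^c$ is $\mathcal{B}$-closed in $X$; equivalently, $f^{-1}(V_a)$ is $\mathcal{B}$-open in $X$. Moreover $S \subseteq f^{-1}(f(S)) \subseteq f^{-1}\big(\bigcup_a V_a\big) = \bigcup_a f^{-1}(V_a)$, so $\{f^{-1}(V_a) \mid a \in \lambda\}$ is a $\mathcal{B}$-open cover of $S$. Because $S$ is $\mathcal{B}$-compact relative to $X$, I would extract finitely many indices $a_1, \dots, a_n$ with $S \subseteq \bigcup_{i=1}^n f^{-1}(V_{a_i})$.

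Finally, applying $f$ and using the inclusion $f(f^{-1}(V)) \subseteq V$ yields $f(S) \subseteq \bigcup_{i=1}^n f\big(f^{-1}(V_{a_i})\big) \subseteq \bigcup_{i=1}^n V_{a_i}$, so $\{V_{a_1}, \dots, V_{a_n}\}$ is a finite subcover of $\mathcal{F}$ for $f(S)$. Since $\mathcal{F}$ was arbitrary, $f(S)$ is contra-compact in $Y$.

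The only genuinely delicate point is bookkeeping with the subspace topology: one must be sure that ``$S$ is $\mathcal{B}$-compact relative to $X$'' is invoked with $\mathcal{B}$-open sets of the ambient space $X$, which is precisely what the sets $f^{-1}(V_a)$ are (contra-$\mathcal{B}$-continuity produces $\mathcal{B}$-open subsets of $X$, not merely of a subspace), and dually that a closed cover of the subspace $f(S)$ may be taken to consist of traces of closed sets of $Y$. Everything else is the routine image/preimage manipulation above, and — unlike some earlier results in this section — no hypotheses on $T_1, T_2$ (such as the intersection identities) are needed.
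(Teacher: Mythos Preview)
Your proof is correct and follows essentially the same approach as the paper: pull the closed cover of $f(S)$ back via contra-$\mathcal{B}$-continuity to a $\mathcal{B}$-open cover of $S$, extract a finite subcover by relative $\mathcal{B}$-compactness, and push forward. The only cosmetic difference is that the paper indexes pointwise (choosing for each $x\in S$ an index $a(x)$ and a $\mathcal{B}$-open $U_x$ with $f(U_x)\subseteq S_{a(x)}$), whereas you work directly with the preimages $f^{-1}(V_a)$; your presentation is cleaner but the argument is the same.
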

\begin{proof}
  Let $\mathcal{F}= \{V_a | \ a \in \lambda \}$ be any cover of $f(S)$. It follows from the closed set of the subspace of $f(S)$ for all $a\in \lambda$ that there exists a closed set $S_a$ of $Y$ such that $S_a \cap f(S)= V_a$ and for each $x \in S$, there exists $a(x) \in \lambda$ where $f(S) \in S_{a(x)}$. Then there exists $U_x$ which is $\mathcal{B}$-open, this implies that $f(U_x) \in S_{a(x)}$ such that the family $\mathcal{F^*}= \{U_x | \ x \in S \}$ is a cover of $S$ by $\mathcal{B}$-open of $X$. But $S$ is $\mathcal{B}$-compact relative to $X$, so there exist $x_1,...,x_n \in S$ and $S \subseteq \cup_{i=1}^n f^{-1} U_{x_i}.$ Hence $f(S) \subseteq f(\cup_{i=1}^n f^{-1} U_{x_i})= \cup_{i=1}^n U_{x_i}$, therefore, $f(S) = \cup_{i=1}^n V_{a(x_i)}$.
\end{proof}
\begin{corollary}
  If $f : (X, \tau, T_1, T_2) \to (Y, \sigma)$ is a contra-$\mathcal{B}$-continuous surjective function and $X$ is $\mathcal{B}$-compact, then $Y$ is contra-compact.
\end{corollary}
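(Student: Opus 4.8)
The plan is to obtain the statement as an immediate consequence of the preceding Proposition. First I would observe that the hypothesis ``$X$ is $\mathcal{B}$-compact'' is exactly the assertion that $X$ is $\mathcal{B}$-compact relative to $X$: by the relevant Definition every cover of $X$ by $\mathcal{B}$-open subsets of $X$ admits a finite subcover. Hence, taking $S := X$ in the previous Proposition, its hypotheses are all met, since $f$ is contra-$\mathcal{B}$-continuous by assumption.

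Applying that Proposition with $S = X$ then yields that $f(X)$ is contra-compact in $Y$. Because $f$ is surjective we have $f(X) = Y$, and therefore $Y$ itself is contra-compact. That completes the argument.

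For completeness one could also give a direct proof without citing the Proposition: start from an arbitrary closed cover $\{V_a : a \in \lambda\}$ of $Y$; since $f$ is contra-$\mathcal{B}$-continuous, each $f^{-1}(V_a)$ is $\mathcal{B}$-open in $X$, so $\{f^{-1}(V_a) : a \in \lambda\}$ is a $\mathcal{B}$-open cover of $X$; by $\mathcal{B}$-compactness of $X$ extract a finite subcover $X = \bigcup_{i=1}^{n} f^{-1}(V_{a_i})$; and finally push forward using surjectivity, $Y = f(X) = \bigcup_{i=1}^{n} V_{a_i}$. Either route is routine, and there is no real obstacle here: the only thing that needs to be checked is the bookkeeping identification of ``$\mathcal{B}$-compact'' with ``$\mathcal{B}$-compact relative to $X$'', which is immediate from the definitions, together with the elementary fact that the preimage of a closed set under a contra-$\mathcal{B}$-continuous map is $\mathcal{B}$-open.
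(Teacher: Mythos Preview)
Your proposal is correct and follows precisely the approach the paper intends: the statement is listed as a corollary with no separate proof, so it is meant to be read off from the preceding Proposition by taking $S = X$ and using surjectivity to identify $f(X)$ with $Y$. Your optional direct argument is also fine and requires nothing beyond the definitions.
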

\begin{definition}
 A function  $f : (X, \tau, T_1, T_2) \to (Y, \sigma)$ is called \textsl{almost weakly-$\mathcal{B}$-continuous}, if for each $x \in X$ and regular set $V$ containing $f(x)$ there exist $U$ which is a $\mathcal{B}$-open set in $X$ containing $x$ such that $f(U) \subseteq \mathrm{Cl}(V)$.
\end{definition}
\begin{proposition}
  Let a function  $f : (X, \tau, T_1, T_2) \to (Y, \sigma)$ be an almost contra-$\mathcal{B}$-continuous and $Y$ be an Urysohn space, then $G(f)$ is regular in $X \times Y.$
\end{proposition}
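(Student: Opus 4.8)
The statement asserts that $G(f)$ is a $\mathcal{B}$-regular graph, so the plan is simply to verify that definition point by point. First I would fix an arbitrary $(x,y) \in X \times Y \setminus G(f)$, which by definition of the graph means $f(x) \neq y$. Since $(Y,\sigma)$ is an Urysohn space, I would then extract open sets $V, W \subseteq Y$ with $y \in V$, $f(x) \in W$ and $\mathrm{Cl}(V) \cap \mathrm{Cl}(W) = \emptyset$.

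The second step is to upgrade $V$ and $W$ to regular open sets without destroying the separation. Put $V^{*} = \mathrm{Int}(\mathrm{Cl}(V))$ and $W^{*} = \mathrm{Int}(\mathrm{Cl}(W))$; these are regular open by construction, and because $V, W$ are open we have $y \in V \subseteq V^{*}$ and $f(x) \in W \subseteq W^{*}$. Moreover $\mathrm{Cl}(V^{*}) = \mathrm{Cl}(\mathrm{Int}(\mathrm{Cl}(V))) \subseteq \mathrm{Cl}(V)$ and likewise $\mathrm{Cl}(W^{*}) \subseteq \mathrm{Cl}(W)$, hence $V^{*} \cap W^{*} \subseteq \mathrm{Cl}(V) \cap \mathrm{Cl}(W) = \emptyset$.

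Finally I would invoke almost contra-$\mathcal{B}$-continuity of $f$ applied to the regular open set $W^{*}$ containing $f(x)$: the set $U := f^{-1}(W^{*})$ is $\mathcal{B}$-closed in $X$ and contains $x$. It then remains only to observe that $(U \times V^{*}) \cap G(f) = \emptyset$, since if $(a, f(a))$ lay in $U \times V^{*}$ then $a \in f^{-1}(W^{*})$ would force $f(a) \in W^{*}$ while simultaneously $f(a) \in V^{*}$, contradicting $V^{*} \cap W^{*} = \emptyset$. Thus $U$ is $\mathcal{B}$-closed containing $x$, $V^{*}$ is regular open containing $y$, and $(U \times V^{*}) \cap G(f) = \emptyset$, which is exactly the requirement for $G(f)$ to be a $\mathcal{B}$-regular graph.

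There is no deep obstacle here; the only points needing a little care are manufacturing regular open sets out of the Urysohn separation by taking interiors of closures and checking that disjointness survives that operation, and remembering to feed the neighbourhood of $f(x)$ (rather than of $y$) into the almost contra-$\mathcal{B}$-continuity hypothesis so that the resulting preimage is a $\mathcal{B}$-closed set around $x$.
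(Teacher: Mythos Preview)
Your proposal is correct and follows essentially the same route as the paper: separate $f(x)$ and $y$ via the Urysohn property, pass to the regular open sets $\mathrm{Int}(\mathrm{Cl}(\cdot))$, feed the one around $f(x)$ into almost contra-$\mathcal{B}$-continuity to obtain a $\mathcal{B}$-closed neighbourhood of $x$, and read off the disjointness required by the $\mathcal{B}$-regular graph definition. If anything, your write-up is more careful than the paper's, which records only $U\cap V=\emptyset$ from the Urysohn condition and does not spell out why the disjointness survives passage to $\mathrm{Int}(\mathrm{Cl}(\cdot))$.
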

\begin{proof}
  Let $(x, y) \in X \times Y \setminus G(f)$, it follows that $y \neq f(x)$, since $Y$ is an Urysohn, then there exist open sets $U$ and $V$ containing $f(x)$ and $y$, respectively where $U \cap V = \emptyset$. Then $$\mathrm{Int(Cl}(U)) \cap \mathrm{Cl(Int}(V)) = \emptyset.$$ Since $f$ is almost contra-$\mathcal{B}$-continuous, we have that $f^{-1}(\mathrm{Int(Cl}(U)))$ is $\mathcal{B}$-closed in $X$ containing $x$. If $W= f^{-1}(\mathrm{Int(Cl}(U)))$, then $f(W) \subseteq \mathrm{Int(Cl}(U))$ such that $f(W) \cap \mathrm{Int(Cl}(V))= \emptyset$ and $\mathrm{Int(Cl}(V)$ is regular in $Y$. Hence $G(f)$ is $\mathcal{B}$-regular in $X\times Y.$
\end{proof}
\begin{proposition}
  Suppose that $f : (X, \tau, T_1, T_2) \to (Y, \sigma)$ has a $\mathcal{B}$-regular graph. If $f$ is a surjective function, then $Y$ is weakly Hausdorff.
\end{proposition}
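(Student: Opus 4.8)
The plan is to verify the defining property of a weakly Hausdorff space directly: I must show that every singleton $\{y_0\}$ in $Y$ is an intersection of regular closed subsets of $Y$. It clearly suffices to prove that for each pair of distinct points $y_0, y_1 \in Y$ there is a regular closed set containing $y_0$ but not $y_1$; intersecting all such sets as $y_1$ ranges over $Y \setminus \{y_0\}$ then yields exactly $\{y_0\}$.

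So I would fix distinct points $y_0, y_1 \in Y$. Using the surjectivity of $f$, choose $x \in X$ with $f(x) = y_0$. Since $f(x) = y_0 \neq y_1$, the point $(x, y_1)$ lies in $X \times Y \setminus G(f)$. Now invoke the hypothesis that $G(f)$ is a $\mathcal{B}$-regular graph: there exist a $\mathcal{B}$-closed set $U$ in $X$ with $x \in U$ and a regular open set $V$ in $Y$ with $y_1 \in V$ such that $(U \times V) \cap G(f) = \emptyset$.

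The key step is to read off what this disjointness says about $y_0$. If we had $y_0 \in V$, then since $x \in U$ and $f(x) = y_0$, the pair $(x, f(x)) = (x, y_0)$ would belong to $(U \times V) \cap G(f)$, contradicting the emptiness of that intersection. Hence $y_0 \notin V$, i.e. $y_0 \in Y \setminus V$. Because $V$ is regular open, its complement $Y \setminus V$ is regular closed, and it contains $y_0$ while excluding $y_1$ (as $y_1 \in V$). This is precisely the separating regular closed set we wanted.

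Finally, letting $y_1$ vary over all points of $Y$ distinct from $y_0$, the intersection of the regular closed sets $Y \setminus V$ so obtained contains $y_0$ and misses every other point, hence equals $\{y_0\}$. Therefore each singleton of $Y$ is an intersection of regular closed sets, so $Y$ is weakly Hausdorff. I do not expect a genuine obstacle here; the only points requiring care are the elementary fact that the complement of a regular open set is regular closed, and the bookkeeping that translates $(U \times V) \cap G(f) = \emptyset$ into $f(x') \notin V$ for all $x' \in U$ — in fact we only need it for the single point $x$ with $f(x) = y_0$. Notice that the $\mathcal{B}$-closedness of $U$ is not used at all; it is the regular openness of $V$ that does the work.
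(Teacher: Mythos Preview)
Your proof is correct and follows essentially the same approach as the paper's own argument: both choose $x$ with $f(x)=y_0$, apply the $\mathcal{B}$-regular graph condition to $(x,y_1)$, and take the complement of the resulting regular open set $V$ as the separating regular closed set. Your write-up is in fact cleaner, and your observation that the $\mathcal{B}$-closedness of $U$ plays no role is accurate.
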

\begin{proof}
  Let $y,\bar{y}$ be any two distinct points of $Y$. Since $f$ is surjective, then there exists $x \in X$ where $f(x)=y$. Notice $(x, \bar{y}) \in (X \times Y)\setminus G(f)$, by the definition of $\mathcal{B}$-regular graph, there exists $\mathcal{B}$-closed set $U$ of $X$ and a regular open $F_{\bar{y}}$ in $Y$ such that $(x, \bar{y}) \in U \times F_{\bar{y}}$ and $f(U) \cap F_{\bar{y}}=\emptyset.$ Since $f(x) \in f(U)$ and $y \notin F_{\bar{y}},$ $\bar{y} \notin F_{\bar{y}}^c$ which is regular closed in $Y,$ we get $y = \underset{\bar{y} \neq y} \cap F_{\bar{y}}^c$. Thus, $Y$ is weakly Hausdorff.
\end{proof}
\begin{proposition}
  Let $f : (X, \tau, T_1, T_2) \to (Y, \sigma)$ be a function which has a $\mathcal{B}$-regular graph. If $f$ is an injective function, then $X$ is $\mathcal{B}$-Frechet.
\end{proposition}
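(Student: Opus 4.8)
The plan is to verify directly the separation property defining a $\mathcal{B}$-Frechet space. Fix two distinct points $x_1, x_2 \in X$. Since $f$ is injective we have $f(x_1) \neq f(x_2)$, and therefore the point $(x_1, f(x_2))$ lies in $X \times Y \setminus G(f)$. Applying the hypothesis that $G(f)$ is a $\mathcal{B}$-regular graph at this point, there exist a $\mathcal{B}$-closed set $U$ in $X$ with $x_1 \in U$ and a regular open set $V$ in $Y$ with $f(x_2) \in V$ such that $(U \times V) \cap G(f) = \emptyset$.

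Next I would extract the one-sided separation. If $x_2$ belonged to $U$, then together with $f(x_2) \in V$ we would get $(x_2, f(x_2)) \in (U \times V) \cap G(f)$, contradicting emptiness; hence $x_2 \notin U$. Consequently $U^c$ is a $\mathcal{B}$-open set that contains $x_2$ and omits $x_1$. Interchanging the roles of $x_1$ and $x_2$ — now using that $(x_2, f(x_1)) \notin G(f)$ — produces a $\mathcal{B}$-closed set $U'$ with $x_2 \in U'$ and $x_1 \notin U'$, so that $U'^c$ is a $\mathcal{B}$-open set containing $x_1$ and missing $x_2$. The pair $U'^c, U^c$ is exactly what the definition of a $\mathcal{B}$-Frechet space requires, so $X$ is $\mathcal{B}$-Frechet. (Alternatively, one may phrase the conclusion through the equivalent singleton formulation: the sets $U$ above show that $x_2 \notin \mathcal{B}\mathrm{Cl}(\{x_1\})$ for every $x_2 \neq x_1$, and since an arbitrary intersection of $\mathcal{B}$-closed sets is $\mathcal{B}$-closed by Remark \ref{B22}(iii), it follows that $\mathcal{B}\mathrm{Cl}(\{x_1\}) = \{x_1\}$, i.e. every singleton is $\mathcal{B}$-closed.)

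I do not anticipate a genuine obstacle here. The only points needing a little care are the bookkeeping of which coordinate of $X \times Y$ the point $f(x_j)$ occupies when the $\mathcal{B}$-regular graph property is invoked, and the observation that the complement of a $\mathcal{B}$-closed set is $\mathcal{B}$-open (with $\mathcal{B}$-closed sets stable under arbitrary intersections), which is what lets one pass freely between the "separation by $\mathcal{B}$-open sets" and the "each singleton is $\mathcal{B}$-closed" descriptions of $\mathcal{B}$-Frechet.
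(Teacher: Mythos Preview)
Your argument is correct and follows essentially the same route as the paper's proof: both start from $(x_1,f(x_2))\notin G(f)$, invoke the $\mathcal{B}$-regular graph to obtain a $\mathcal{B}$-closed $U\ni x_1$ and regular open $V\ni f(x_2)$ with $(U\times V)\cap G(f)=\emptyset$ (equivalently $f(U)\cap V=\emptyset$), and conclude $x_2\notin U$ so that $U^c$ is a $\mathcal{B}$-open neighbourhood of $x_2$ missing $x_1$. The paper then passes directly to the singleton formulation (``$\{x_1\}$ is $\mathcal{B}$-closed''), whereas you spell out the symmetric step to produce the second $\mathcal{B}$-open set; this is a cosmetic difference, and you already note the singleton alternative yourself.
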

\begin{proof}
  Assume that $x_1,x_2$ are any two distinct points of $X$. Since $f$ is injective, it follows that $(x_1, f(x_2)) \in (X \times Y)\setminus G(f)$, by the definition of $\mathcal{B}$-regular graph. Then there exist a $\mathcal{B}$-closed set $U$ of $X$ and a regular open set $V$ in $Y$ such that $(x_1, f(x_2)) \subset U \times V$ and $f(U) \cap V = \emptyset,$ therefore $U \cap f^{-1}(V)=\emptyset$ and $x_2 \notin U$. Thus $x_1 \notin U^c, \ x_2 \in U^c$ and $U^c$ is $\mathcal{B}$-open which means that $\{x_1\}^c$ is $\mathcal{B}$-open, that is  $\{x_1\}$ is $\mathcal{B}$-closed. So $X$ is is $\mathcal{B}$-Frechet.
\end{proof}
\begin{proposition}
   Let $f : (X, \tau, T_1, T_2) \to (Y, \sigma)$ be a weakly-$\mathcal{B}$-continuous function and $Y$ be an Urysohn space. Then $G(f)$ is contra $\mathcal{B}$-regular in $X \times Y.$
\end{proposition}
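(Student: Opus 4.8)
\emph{Proof proposal.} The plan is to follow the pattern of the preceding proposition (almost contra-$\mathcal{B}$-continuous $+$ Urysohn $\Rightarrow$ $\mathcal{B}$-regular graph) almost verbatim, the only change being that weak $\mathcal{B}$-continuity replaces almost contra-$\mathcal{B}$-continuity and one works with the closure $\mathrm{Cl}(V)$ of the Urysohn neighbourhood rather than with $\mathrm{Int}(\mathrm{Cl}(V))$. I read \emph{weakly-$\mathcal{B}$-continuous} as: for every $x \in X$ and every open set $V$ in $Y$ with $f(x) \in V$ there is a $\mathcal{B}$-open set $U$ in $X$ with $x \in U$ and $f(U) \subseteq \mathrm{Cl}(V)$; and \emph{contra-$\mathcal{B}$-regular graph} as the contra-analogue of the $\mathcal{B}$-regular graph, i.e. for each $(x,y) \in (X \times Y)\setminus G(f)$ there exist a $\mathcal{B}$-open set $U$ in $X$ containing $x$ and a regular closed set $V$ in $Y$ containing $y$ such that $(U \times V) \cap G(f) = \emptyset$, equivalently $f(U) \cap V = \emptyset$.

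First I would fix an arbitrary $(x,y) \in (X \times Y)\setminus G(f)$, so that $f(x) \neq y$. Since $Y$ is an Urysohn space, there exist open sets $U_0$ and $V_0$ in $Y$ with $f(x) \in U_0$, $y \in V_0$ and $\mathrm{Cl}(U_0) \cap \mathrm{Cl}(V_0) = \emptyset$. Then I would apply weak $\mathcal{B}$-continuity of $f$ at $x$ to the open neighbourhood $U_0$ of $f(x)$, obtaining a $\mathcal{B}$-open set $W$ in $X$ with $x \in W$ and $f(W) \subseteq \mathrm{Cl}(U_0)$.

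Next I would record the one elementary fact used: the closure of an open set is regular closed, so $F := \mathrm{Cl}(V_0)$ satisfies $F = \mathrm{Cl}(\mathrm{Int}(\mathrm{Cl}(V_0)))$; indeed $V_0 \subseteq \mathrm{Int}(\mathrm{Cl}(V_0)) \subseteq \mathrm{Cl}(V_0)$ yields both inclusions after applying $\mathrm{Cl}$. Thus $F$ is a regular closed set in $Y$ containing $y$. Combining $f(W) \subseteq \mathrm{Cl}(U_0)$ with $\mathrm{Cl}(U_0) \cap F = \mathrm{Cl}(U_0) \cap \mathrm{Cl}(V_0) = \emptyset$ gives $f(W) \cap F = \emptyset$, i.e. $(W \times F) \cap G(f) = \emptyset$. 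Since $W$ is $\mathcal{B}$-open containing $x$, $F$ is regular closed containing $y$, and $(x,y)$ was arbitrary, $G(f)$ is contra-$\mathcal{B}$-regular in $X \times Y$.

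I do not expect a genuine obstacle in this argument: the heart of it is that the Urysohn condition is phrased via closures, which is exactly what weak $\mathcal{B}$-continuity can reach, and that $\mathrm{Cl}$ of an open set is automatically regular closed. The only point requiring attention is the bookkeeping of which objects are $\mathcal{B}$-open versus $\mathcal{B}$-closed and regular open versus regular closed, so that the conclusion is matched to the correct ``contra'' variant of the $\mathcal{B}$-regular graph notion; once the definitions are lined up, the inclusions above close the proof immediately.
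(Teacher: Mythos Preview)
Your proposal is correct and follows essentially the same approach as the paper: pick $(x,y)\notin G(f)$, use the Urysohn condition to separate $f(x)$ and $y$ by open sets with disjoint closures, apply weak $\mathcal{B}$-continuity to obtain a $\mathcal{B}$-open $W\ni x$ with $f(W)$ inside one closure, and observe that the other closure is regular closed. Your write-up is in fact cleaner than the paper's (which has a few typos, e.g.\ writing $\mathrm{Cl}(U)\cap\mathrm{Cl}(W)=\emptyset$ where $\mathrm{Cl}(U)\cap\mathrm{Cl}(V)=\emptyset$ is meant), and your explicit verification that $\mathrm{Cl}(V_0)$ is regular closed spells out what the paper encodes by rewriting $\mathrm{Cl}(U)$ as $\mathrm{Cl}(\mathrm{Int}(U))$.
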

\begin{proof}
  Let us consider $(x, y) \in (X \times Y) \setminus G(f)$, therefore $y \neq f(x)$. Since $Y$ is an Urysohn space, then there exist two open sets $U$ and $V$ in $Y$ containing $y$ and $f(x)$, respectively. Consider that $W$ is a $\mathcal{B}$-open set containing $x$ and $\mathrm{Cl}(U) \cap \mathrm{Cl}(W)=\emptyset.$ Since we are working under the assumption that
    $f$ is weakly-$\mathcal{B}$-continuous, then $f(x) \subseteq \mathrm{Cl}(V)$ which implies that $ f(W) \cap \mathrm{Cl}(U) =  f(W) \cap \mathrm{Cl(Int}(U))= \emptyset$ and $\mathrm{Cl(Int}(U))$ is regular closed containing $y.$ Hence $G(f)$ is a contra $\mathcal{B}$-regular graph in $X \times Y,$ which completes the proof.

\end{proof}

\paragraph{{\bf}ACKNOWLEDGEMENTS} I would like to express my sincere gratitude to my supervisor Dr. L\'{a}szl\'{o} Kozma, for carefully reviewing this paper, providing beneficial suggestions.


\begin{thebibliography}{99}

\bibitem{A1}
M.E. Abd El-Monsef, S.N. El-Deeb and R.A. Mahmoud,
$\beta$-open sets and $\beta$-continuous mapping, {\em  Bull. Fac. Sci. Assiut Univ. A}. {\bf 12} (1983), no. 1, 77--90.

\bibitem{LA}
L. M. Alabdulsada,
On the class of weakly almost contra-T*-continuous functions, to appear in {\em Publ. Math. Debrecen}. (2019).

\bibitem{A2}
 A. Al-Omari and M.S.M. Noorani,
Some properties of contra-$b$-continuous and almost contra-$b$-continuous functions, {\em Eur. J. Pure Appl. Math.} {\bf 2}(2009), no. 2, 213--230.

\bibitem{A4}
D. Andrijevi\'{c},
On $b$-open sets, {\em Mat. Vesnik}. {\bf 48} (1996), no. 1-2, 59--64.


\bibitem{A6}
 S. P. Arya and M. P. Bhamini,
Some generalizations of pairwise Urysohn spaces,
{\em Indian J. Pure Appl. Math.} {\bf 18} (1987), no. 12, 1088--1093.

\bibitem{B1}
S. S. Benchalli, P. G. Patil, J. B. Toranagatti,   S. R. Vighneshi,
Contra $\delta$gb-continuous functions in topological spaces,
{\em Eur. J. Pure Appl. Math.} {\bf 10} (2017), no. 2, 312--322.
\bibitem{D1}
J. Dontchev,
Contra-continuous functions and strongly $S$-closed spaces, {\em Internat.J.
Math. Math. Sci.} {\bf 19} (1996),  no. 2, 303--310.

\bibitem{E1}
E. Ekici,
Almost contra-precontinuous functions, {\em  Bull. Malays. Math. Sci. Soc.(2)}  {\bf 27} (2004), no. 1, 53--65.


\bibitem{E3}
E. Ekici,
On the notion of ($\gamma, s$)-continuous functions, {\em Demonstratio Math.}  {\bf 38} (2005), no. 3, 715--727.

\bibitem{E4}
E. Ekici,
On contra $\pi$g-continuous functions, {\em Chaos Solitons Fractals.}  {\bf 35} (2008), no. 1, 71--81.

\bibitem{E5}
E. Ekici,
New forms of contra-continuity, {\em Carpathian J. Math.}  {\bf 24} (2008), no. 1, 37--45.

\bibitem{L1}
N. Levine,
Semi-open sets and semi-continuity in topological spaces, {\em Amer. Math. Monthly.} {\bf 70} (1963),
36--41.

\bibitem{M1}
A.S. Mashhour, M.E. Abd El-Monsef and S.N. El-Deeb,
On precontinuous and weak precontinuous mappings,
{\em  Proc. Math. Phys. Soc. Egypt.} {\bf No. 53} (1982), 47--53.

\bibitem{M21}

H. J. Mustafa, A. L. Moussa, A. K. Mazal,
Operator topological spaces, {\em Journal of the College of Education}.
Almustansiriyah Univ.{\bf (1)} (2011), 213--221.

\bibitem{M3}
H. J. Mustafa and L. M. Alabdulsada,
On almost contra T*-continuous functions, {\em  J. of Kufa for Math. and Comp.}  {\bf 1} (2012), No. 6, 1--6.

\bibitem{N2}
O. Nj{\aa}stad,
On some classes of nearly open sets, {\em Pacific J. Math.}  {\bf 15} (1965), 961--970.

\bibitem{N31}
T. Noiri,
On almost continuous functions, {\em Indian J. Pure Appl. Math.}  {\bf 20} (1989), no. 6, 571--576.

\bibitem{N3}
T. Noiri, A. Al-Omari and M.S.M. Noorani,
Weakly $b$-open functions, {\em Math. Balkanica (N.S.)}  {\bf 23}(2009), no. 1-2, 1--13.

\bibitem{S1}
T. Soundararajan,
Weakly Hausdorff spaces and the cardinality of topological spaces, 1971 {\em General Topology and its Relations to Modern Analysis and Algebra, III, (Proc.
Conf. Kanpur, 1968,} Prague (1971), 301--306.

\bibitem{W1}
S. Willard,
General Topology spaces, {\em Addison Wesley.}  1970.
\bibitem{SM}
M.K. Singal and A. Mathur,
On nearly-compact spaces, {\em Boll. Un. Mat. Ital. (4)} {\bf 2} (1969),  702--710.

\bibitem{ST}
M.H. Stone,
Applications of the theory of Boolean rings to general topology, {\em Trans. Amer. Math. Soc.} {\bf 41} (1937), no. 3, 375--481.
\end{thebibliography}
\end{document}